\newtheorem{thm}{Proposition}[section]
\newtheorem{Thm}[thm]{Theorem}
\newtheorem{lem}[thm]{Lemma}
\newtheorem{defn}[thm]{Definition}
\title{On integral schemes over symmetric monoidal categories}
\author{Abhishek Banerjee}
\date{}
\numberwithin{equation}{section} \DeclareMathSizes{2}{10}{12}{13}
\begin{document}

\maketitle

\centerline{Dept. of Math, Indian Institute of Science, Bangalore - 560012, India.} 
\centerline{Email: abhishekbanerjee1313@gmail.com}

\begin{abstract}
We propose notions of ``Noetherian'' and ``integral'' for schemes  over an abelian symmetric monoidal category $(\mathcal C,\otimes,1)$. For Noetherian integral schemes, we construct a ``function field'' that is a commutative monoid object
of $(\mathcal C,\otimes,1)$. Our main result is a bijection between dominant rational maps
and morphisms of these ``function field objects''.

\end{abstract}

\section{Introduction}
Let $(\mathcal C,\otimes,1)$ be an abelian, closed symmetric monoidal category satisfying certain conditions. Then, a monoid object 
in $(\mathcal C,\otimes,1)$ is a triple $(A,m_A,e_A)$ consisting of a ``multiplication map''  $m_A:A\otimes A\longrightarrow A$ 
and  a ``unit map'' $e_A:1\longrightarrow A$  satisfying compatibility conditions analogous to an ordinary ring (see, for instance, 
\cite{BJT}). Accordingly, one may study the category $A-Mod$ of (left) $A$-module objects in $\mathcal C$ for such a monoid object $A$. For instance, the general Morita theory for monoids over symmetric monoidal categories has been studied by Vitale \cite{Vitale}.  Further, monoid objects in abelian model categories have been studied by Hovey \cite{Hovey}.  In \cite{AB4}, we have developed the theory of centers, centralizers as well as an analogue of usual localization in
commutative algebra  for monoids over $(\mathcal C,\otimes,1)$. In this paper, 
we continue our program of studying commutative algebra and algebraic geometry over symmetric monoidal categories
from \cite{AB4}, \cite{AB4.5}, \cite{AB5} and \cite{AB6}.  Our purpose in this note is to develop a good theory for integral schemes over symmetric monoidal categories. We mention here that our notion of a ``scheme'' over a symmetric monoidal category is
that given by To\"{e}n and Vaqui\'{e} \cite{TV}. The idea of doing algebraic geometry over a symmetric monoidal category $(\mathcal C,\otimes,1)$ has been developed by several authors (see, for instance, Deligne \cite{Del}, Hakim \cite{Hak}, To\"{e}n and Vaqui\'{e} \cite{TV}). When $\mathcal C=k-Mod$, the category of modules over a commutative ring $k$, we recover the usual
algebraic geometry of schemes over $Spec(k)$. 

\medskip 
More precisely, let $(\mathcal C,\otimes,1)$ be an abelian closed symmetric monoidal category that is also ``locally finitely generated''. The theory of locally finitely generated abelian categories and indeed, the theory of locally finitely generated
Grothendieck categories is very well developed in the literature and we refer the reader to \cite{Gar} for an introduction.  
We denote by $Comm(\mathcal C)$ the category of commutative monoid objects in $\mathcal C$.
 We will say that a commutative monoid object $A$ of $(\mathcal C,\otimes,1)$
is ``integral'' if $Hom_{A-Mod}(A,A)$ is an ordinary integral domain. However, this definition of integrality is really ``at the level of global sections'' which makes it difficult to extend results on usual integral schemes to schemes over
$(\mathcal C,\otimes,1)$. In this note, we realized that when this notion of integrality is strengthened
with a Noetherian assumption (see Definition \ref{D2}), we can obtain analogues of several important properties of integral schemes in usual algebraic 
geometry.  We mention here that we have explored the notion of ``Noetherian'' for monoids over symmetric monoidal categories
previously in \cite{AB5} and \cite{AB6}. However, our definition of  ``Noetherian''  in this note differs substantially from those presented  in \cite{AB5} and \cite{AB6}. Further,  our methods in this paper are a combination
of the methods used previously in \cite{AB5} and \cite{AB6}.  Our purpose is the following: for a Noetherian integral scheme $X$ over $(\mathcal C,\otimes,1)$ we construct
a commutative monoid object $K(X)$ in $(\mathcal C,\otimes,1)$ that plays the role of the ``function field'' of $X$.  In fact, we show that the category $K(X)-Mod$ of $K(X)$-module objects in $(\mathcal C,\otimes,1)$ satisfies several properties similar to those
of the category of vector spaces over a field. Thereafter, we make an additional assumption that  any commutative monoid
object $A$ in $\mathcal C$ is also a compact object of the category $A-Mod$. This is true, for instance, if $\mathcal C$ is taken
to be the category of sheaves of $\mathcal A$-modules, where $\mathcal A$ is a sheaf of commutative rings on a compact
topological space with a basis of compact open sets. This extra condition allows us to show that  a Noetherian scheme over $(\mathcal C,\otimes,1)$ is integral if and only if it is reduced and irreducible. Finally, our main result is the following:

\medskip
\begin{Thm}\label{Thm1.1} Let $(\mathcal C,\otimes,1)$ be an abelian, closed symmetric monoidal category that is also locally
finitely generated. Suppose that for any commutative monoid object $A$ in $\mathcal C$, $A$ is a compact
object of $A-Mod$.   Let $X$, $Y$ be  Noetherian integral schemes of finite type over $(\mathcal C,\otimes,1)$. Then, there exists a bijection between morphisms $K(X)\longrightarrow K(Y)$ in $Comm(\mathcal C)$ and dominant rational maps from $Y$ to $X$. 
\end{Thm}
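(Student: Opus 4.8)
The plan is to mirror the classical proof that dominant rational maps between integral schemes correspond bijectively to field extensions of function fields, but carried out in the monoidal setting. The two directions are: (a) a dominant rational map $f\colon Y \dashrightarrow X$ induces a monoid morphism $K(X) \to K(Y)$, and (b) a monoid morphism $\varphi\colon K(X) \to K(Y)$ recovers a dominant rational map. The key structural fact I would want to use is that, for a Noetherian integral scheme $X$, the function field object $K(X)$ is realized as a colimit (a filtered localization) of the structure monoid over nonempty open affines $\mathrm{Spec}(A) \subseteq X$, and that every nonempty open subset is dominant, so that for any two nonempty opens $U, V \subseteq X$ one has $K(U) \cong K(X) \cong K(V)$ canonically. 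This ``locality'' of $K(X)$ is the monoidal analogue of the fact that the function field only depends on a dense open, and it is what lets a rational map — which is only defined on some open — induce a global morphism of function field objects.

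\emph{Direction (a).} Given a dominant rational map represented by a genuine morphism $f\colon V \to X$ for some nonempty open $V \subseteq Y$, dominance means the scheme-theoretic image is all of $X$. I would restrict to nonempty open affines $\mathrm{Spec}(B) \subseteq V$ and $\mathrm{Spec}(A) \subseteq X$ with $f(\mathrm{Spec}(B)) \subseteq \mathrm{Spec}(A)$; dominance forces the induced monoid map $A \to B$ to be injective (this is where integrality of $A$ — i.e. $\mathrm{Hom}_{A\text{-}Mod}(A,A)$ being a domain — together with the Noetherian hypothesis is used to guarantee that the ``generic point'' behaves correctly and that injectivity on global sections of an affine patch is detected). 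Passing to the filtered colimit that defines the function field objects, the injective map $A \to B$ localizes to a monoid morphism $K(X) \to K(Y)$, independent of the representative $(V,f)$ because any two representatives agree on the intersection of their domains, which is again a nonempty (hence dominant) open.

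\emph{Direction (b).} This is the reverse and is the substantive half. Given $\varphi\colon K(X)\to K(Y)$ in $Comm(\mathcal C)$, I would pick a nonempty open affine $\mathrm{Spec}(A)\subseteq X$. Since $X$ is of finite type, $A$ is a finitely generated monoid object, so — using that $A$ is a compact object of $A\text{-}Mod$ together with local finite generation of $\mathcal C$ and the colimit presentation of $K(Y)$ — the composite $A \to K(X) \xrightarrow{\varphi} K(Y)$ factors through some stage $B = \mathcal O_Y(\mathrm{Spec}(B))$ of the filtered colimit defining $K(Y)$, for a nonempty open affine $\mathrm{Spec}(B)\subseteq Y$. \emph{Compactness of $A$ is exactly the finiteness that makes this factorization possible, and I expect establishing it cleanly to be the main obstacle}, since one must ensure the finitely many ``generators'' of $A$ all land in a common affine stage and that the monoid relations are preserved there after possibly shrinking $B$. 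This yields a monoid morphism $A \to B$, hence a scheme morphism $\mathrm{Spec}(B) \to \mathrm{Spec}(A) \hookrightarrow X$ defined on the nonempty open $\mathrm{Spec}(B)\subseteq Y$, i.e. a rational map $Y \dashrightarrow X$; injectivity of $A \to B$ (inherited from $\varphi$ being a morphism of function fields, which are the ``fraction objects'' of integral monoids) gives dominance.

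\emph{Bijectivity.} Finally I would check the two constructions are mutually inverse. Composing (a) after (b) recovers $\varphi$ because both are determined by their effect on the dense affine stages and $K(X), K(Y)$ are the colimits over those stages; composing (b) after (a) recovers the original rational map up to the equivalence defining rational maps, since the reconstructed morphism agrees with the original on a common nonempty open. Throughout, the essential reductions are that (i) function field objects are insensitive to shrinking to nonempty opens, and (ii) finite type plus compactness let global data on $K$ descend to finite affine stages; the Noetherian and integrality hypotheses are what guarantee these colimits are well-behaved and that dominance is equivalent to injectivity of the associated monoid maps.
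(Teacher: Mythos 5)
Your overall architecture matches the paper's: use $K(X)\cong K(A)$ for every non-trivial affine open $Spec(A)$, use the finite type hypothesis to factor $A\to K(X)\to K(Y)=colim_t B_t$ through a stage $B_t$, and in the other direction restrict a dominant rational map to an affine and localize. (One small miscalibration: the factorization through a finite stage, which you flag as ``the main obstacle,'' is immediate in the paper because ``finite type'' is \emph{defined} as $Hom_{Comm(\mathcal C)}(A,\_\_)$ preserving filtered colimits; no work is needed there.) The genuine gaps are in the two verifications you dispatch by classical analogy.

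First, in your direction (b), you claim that ``injectivity of $A\to B$ gives dominance.'' In this setting dominance is the statement that $Spec(B_t)\times_X U'\ne Spec(0)$ for \emph{every} non-trivial $U'\in ZarAff(X)$, including those not contained in $Spec(A)$, and it amounts to non-vanishing of tensor products $B_t\otimes_A C$; injectivity of a single map $A\to B_t$ does not yield this, and there is no generic point to argue with. The paper's actual argument is: if some $B_t\otimes_A C=0$, then since $K(A)$ has no non-zero proper subobjects (Proposition \ref{P1}) the map $K(A)\to K(B)$ is a monomorphism, and flatness of $K(A)$ over $A$ together with $K(A)\otimes_AK(A)\cong K(A)$ gives $0\ne K(A)\hookrightarrow K(B_t)\otimes_AK(C)=K(B_t)\otimes_{B_t}(B_t\otimes_AC)\otimes_CK(C)=0$, a contradiction. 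Second, in your direction (a), what is needed is not injectivity of $A\to B'$ as a morphism of $\mathcal C$ but the statement that $\mathcal E(\varphi):\mathcal E(A)\to\mathcal E(B')$ kills no non-zero element, since only then does the universal property of $K(A)$ (which inverts all of $\mathcal E(A)\setminus\{0\}$) produce the map $K(A)\to K(B')$. The paper proves this by contradiction using irreducibility of $Y$ (Proposition \ref{PZZ}): if $\mathcal E(\varphi)(s)=0$ for some $s\ne 0$, then dominance supplies a non-trivial $V''\in ZarAff(Spec(A_s)\times_XV)$ whose intersection with $V'$ is trivial, contradicting irreducibility. Your appeal to ``the generic point behaving correctly'' is not a substitute for either computation. (Your closing paragraph on mutual inverseness is reasonable in spirit; the paper itself does not spell this out.)
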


\medskip
The background of our problem is as follows: in \cite{AB5} we have constructed, corresponding to every integral scheme 
$X$ over $(\mathcal C,\otimes,1)$ an ordinary field $k(X)$ without the Noetherian assumption. The elements of this field
are equivalence classes of pairs $(U,t_U)$ where $U=Spec(A)$ is a non-trivial open affine of $X$ and $t_U\in Hom_{A-Mod}(A,A)$ 
(see Section 2 for precise details). However, the association $X\mapsto k(X)$ loses a lot of information, i.e., for integral schemes
$X$, $Y$ over $(\mathcal C,\otimes,1)$, a morphism $k(X)\longrightarrow k(Y)$ of fields cannot be used to construct a rational map
$Y\dashedrightarrow X$ of schemes over $(\mathcal C,\otimes,1)$. In this paper we have obtained something stronger; a commutative
monoid object $K(X)$ of $\mathcal C$ for each Noetherian integral scheme $X$. As mentioned in Theorem \ref{Thm1.1} above,
morphisms $K(X)\longrightarrow K(Y)$ in $Comm(\mathcal C)$ correspond to dominant rational maps from $Y$ to $X$. Further,
we will see in Proposition \ref{Pop} that the ordinary field $k(X)$ constructed in \cite[$\S$ 4]{AB5} may be recovered from the commutative monoid object $K(X)\in Comm(\mathcal C)$
simply as $k(X)\cong Hom_{K(X)-Mod}(K(X),K(X))$. We also show that the category $K(X)-Mod$ satisfies several properties
similar to that of vector spaces over a field; for instance, any finitely generated $K(X)$-module is isomorphic
to a finite direct sum of copies of $K(X)$ (see Proposition \ref{PZ}). 

\medskip
We hope that the notion of the ``internal function field object''  in this paper will be the first step towards the
systematic development of related concepts such as  Weil divisors and Cartier divisors for schemes over
$(\mathcal C,\otimes,1)$ and eventually a good Chow theory for schemes over $(\mathcal C,\otimes,1)$. Further, 
the formalism of schemes over $(\mathcal C,\otimes,1)$ is the starting point for obtaining analogous results 
in homotopical algebraic geometry over an abelian symmetric monoidal category. In particular, we know that the category of simplicial modules
over a simplicial commutative ring is connected to the derived algebraic geometry of Lurie \cite{Lurie}. For more on
homotopical algebraic geometry, we refer the reader to the work of To\"{e}n and Vezzosi \cite{TV1} \cite{TV2}. 

\section{Integral schemes over $(\mathcal C,\otimes,1)$}

Let $(\mathcal C,\otimes,1)$ be an abelian, closed symmetric monoidal category. Then, for any 
$A$ in the category $Comm(\mathcal C)$ of commutative monoid objects  of $\mathcal C$, the 
category $A-Mod$ of $A$-modules is abelian and closed symmetric monoidal (see Vitale \cite{Vitale}). We  assume that filtered colimits commute with finite limits
in $A-Mod$. Let $Aff_{\mathcal C}:=Comm(\mathcal C)^{op}$ be the category of affine
schemes over $\mathcal C$ and denote by $Spec(A)$ the affine scheme corresponding
to $A\in Comm(\mathcal C)$. Then, To\"{e}n and Vaqui\'{e} \cite{TV} have introduced a Zariski topology on $Aff_{\mathcal C}$ as well as the notion
of Zariski open immersions in the category $Sh(Aff_{\mathcal C})$ of sheaves of sets
on $Aff_{\mathcal C}$. 

\medskip
\begin{defn}(see \cite[D\'{e}finition 2.15]{TV}) \label{D1} Let $X$ be an object of $Sh(Aff_{\mathcal C})$. Then, $X$ is said to
be a scheme over $(\mathcal C,\otimes,1)$ if there exists an epimorphism $
p:\coprod_{i\in I}X_i\longrightarrow X
$ in $Sh(Aff_{\mathcal C})$  where each $X_i$ is an affine scheme and each
$X_i\longrightarrow X$ is a Zariski open immersion.
\end{defn}

\medskip
By definition,  $M\in A-Mod$ is finitely generated if the functor 
$Hom_{A-Mod}(M,\_\_)$ preserves filtered colimits of monomorphisms in $A-Mod$. An $A$-module $M$ will be called
finitely presented if it can be expressed as a colimit $M\cong colim(0\longleftarrow 
A^m\overset{q}{\longrightarrow} A^n)$ for some morphism $q:A^m\longrightarrow A^n$ of free $A$-modules.  We now
assume that $\mathcal C$ is ``locally finitely generated'', i.e., any  
$M\in A-Mod$ may be expressed as a filtered colimit of its finitely generated submodules. 

\medskip
\begin{defn}\label{D2} A commutative monoid object $A\in Comm(\mathcal C)$ will be said
to be integral if $\mathcal E(A):=Hom_{A-Mod}(A,A)$ is an ordinary integral domain. Further,
$A\in Comm(\mathcal C)$ will be said to be Noetherian if $M\in A-Mod$ is  finitely generated if and only if $M$ is also finitely presented.

\medskip
A scheme $X$ over $(\mathcal C,\otimes,1)$ will be called integral (resp. Noetherian) if 
given any object $U=Spec(A)\longrightarrow X$ in the category $ZarAff(X)$ of Zariski open
affines of $X$, $A\in Comm(\mathcal C)$ is integral (resp. Noetherian). 
\end{defn} 

For integral $A\in Comm(\mathcal C)$ and any $0\ne s\in \mathcal E(A)$, we consider the localization $A_s:=colim(A\overset{s}{\longrightarrow}A\overset{s}{\longrightarrow}...)$
as in \cite[$\S$ 3]{AB4}. Then, we can consider the ``field of fractions'' $K(A)$ of 
$A$: 
\begin{equation}\label{E1}
K(A):=\underset{s\in \mathcal E(A)\backslash\{0\}}{colim}\textrm{ }A_s
\end{equation} having the universal property that any morphism $g:A\longrightarrow B$ in 
$Comm(\mathcal C)$ such that $\mathcal E(g)(s)$ is a unit in $\mathcal E(B)$ for each 
$0\ne s\in \mathcal E(A)$ induces a unique morphism from $K(A)$ to $B$ (see \cite[$\S$ 3]{AB4}). 

\begin{lem}\label{L3} If $A\in Comm(\mathcal C)$ is Noetherian and integral, every
$0\ne s\in \mathcal E(A)=Hom_{A-Mod}(A,A)$ is a monomorphism in $A-Mod$. 
\end{lem}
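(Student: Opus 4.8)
The plan is to prove that $s$ is a monomorphism by showing that its kernel $Ker(s)$ vanishes in the abelian category $A-Mod$, since a morphism in an abelian category is a monomorphism exactly when its kernel is zero. I would argue by contradiction: assuming $K:=Ker(s)\neq 0$, the goal is to manufacture out of $K$ a \emph{nonzero} endomorphism of $A$ that is annihilated by $s$, i.e. a genuine zero-divisor in $\mathcal E(A)=Hom_{A-Mod}(A,A)$, which is impossible since $A$ is integral and $s\neq 0$.

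First I would exploit that $\mathcal C$, and hence $A-Mod$, is locally finitely generated, writing $K$ as the filtered colimit of its finitely generated submodules. Since $K\neq 0$ these submodules cannot all be zero, so I obtain a nonzero finitely generated submodule $K'\hookrightarrow K\hookrightarrow A$. The point of descending to $K'$ is that finite generation is precisely the hypothesis that will let me compare $K'$ with the free modules $A^n$.

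Next comes the key step, where the Noetherian assumption is used. Because $K'$ is finitely generated and $A$ is Noetherian, $K'$ is finitely presented, so $K'\cong colim(0\longleftarrow A^m\overset{q}{\longrightarrow}A^n)$ and in particular there is an epimorphism $\pi\colon A^n\twoheadrightarrow K'$. Composing with the inclusions $K'\hookrightarrow K\hookrightarrow A$ gives a morphism $h\colon A^n\longrightarrow A$ with $Im(h)=K'$. As $K'\neq 0$ the inclusion $K'\hookrightarrow A$ is a nonzero morphism, and since $\pi$ is an epimorphism (hence right-cancellable) the composite $h$ is nonzero as well. Writing $h=(h_1,\dots,h_n)$ with $h_j\colon A\longrightarrow A$ the restriction of $h$ along the $j$-th coproduct inclusion $A\hookrightarrow A^n$, at least one $h_j\in\mathcal E(A)$ is nonzero. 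Each $h_j$ has image contained in $Im(h)=K'\subseteq K=Ker(s)$, whence $s\circ h_j=0$. Choosing $j$ with $h_j\neq 0$ yields $s\neq 0$, $h_j\neq 0$, yet $s\cdot h_j=0$ in $\mathcal E(A)$, contradicting integrality; therefore $K=0$ and $s$ is a monomorphism.

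The step I expect to be the main obstacle, and the one that genuinely forces both hypotheses into play, is the passage from a nonzero finitely generated subobject $K'$ of $A$ to a nonzero endomorphism of $A$ whose image lies in $K'$. One cannot simply assert $Hom_{A-Mod}(A,K')\neq 0$, because $A$ need not be a generator of $A-Mod$ (in the sheaf models the unit typically fails to detect all nonzero subobjects), so without Noetherianity a nonzero torsion subobject could be invisible to $\mathcal E(A)$ and the integrality of $\mathcal E(A)$ would say nothing about it. It is exactly the finite presentation $A^n\twoheadrightarrow K'$, supplied by the Noetherian hypothesis, that couples the categorical non-injectivity of $s$ to the ring-theoretic integrality of $\mathcal E(A)$. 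The remaining points, namely that $colim(0\longleftarrow A^m\to A^n)$ is the cokernel and carries a canonical epimorphism from $A^n$, and that the image of a monomorphism precomposed with an epimorphism equals the image of the monomorphism, are routine in the abelian category $A-Mod$ and I would only verify them in passing.
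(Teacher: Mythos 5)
Your proof is correct and rests on the same two ingredients as the paper's: integrality of $\mathcal E(A)$ forces $Hom_{A-Mod}(A,Ker(s))=0$, and the Noetherian plus locally finitely generated hypotheses upgrade this to $Ker(s)=0$. The only difference is organizational: the paper argues directly, propagating $Hom_{A-Mod}(M,Ker(s))=0$ from $M=A$ to all finitely presented $M$ and then to all $M$ before invoking Yoneda, whereas you argue contrapositively by presenting a nonzero finitely generated submodule $K'$ of the kernel via an epimorphism $A^n\twoheadrightarrow K'$ and extracting a nonzero component $h_j\in\mathcal E(A)$ with $s\circ h_j=0$ --- a valid cosmetic reshuffling of the same idea.
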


\begin{proof} We choose $0\ne s:A\longrightarrow A$ and let $i:I:=Ker(s)\longrightarrow A$ be the monomorphism of the kernel of $s$ into $A$. For any $g\in Hom_{A-Mod}(A,I)$, we see that
$s\circ (i\circ g)=0$. Since $\mathcal E(A)$ is an integral domain, we must have $g=0$. Therefore, 
$Hom_{A-Mod}(A,I)=0$ and hence $Hom_{A-Mod}(M,I)=0$ for any finitely presented $A$-module $M$.  Finally since any $M\in A-Mod$ can be expressed as a colimit of finitely presented
$A$-modules (since $A$ is Noetherian), we see that $Hom_{A-Mod}(M,I)=0$ for any $M\in A-Mod$. Hence, 
$I=0$. 
\end{proof}

\begin{lem}\label{Lem2} Let $A\in Comm(\mathcal C)$ be Noetherian and integral and let
$K(A)$ be as defined in \eqref{E1}. Then, $\mathcal E(K(A))=Hom_{K(A)-Mod}(K(A),K(A))$
is a field. 
\end{lem}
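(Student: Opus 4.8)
The plan is to prove something sharper, namely that $\mathcal E(K(A))$ is canonically the ordinary field of fractions $\mathrm{Frac}(\mathcal E(A))$ of the integral domain $\mathcal E(A)$; since the field of fractions of a domain is a field, this settles the lemma. The first step is to rewrite the endomorphism ring of the unit object of $K(A)-Mod$ by base change. The canonical morphism $A\longrightarrow K(A)$ in $Comm(\mathcal C)$ makes $K(A)$ an $A$-module, and extension of scalars $K(A)\otimes_A(\_\_)$ is left adjoint to restriction; since $K(A)\otimes_AA\cong K(A)$, applying this adjunction to the unit $A$ produces a ring isomorphism $\mathcal E(K(A))=Hom_{K(A)-Mod}(K(A),K(A))\cong Hom_{A-Mod}(A,K(A))$. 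Thus it suffices to compute the $A$-module global sections of $K(A)$.

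Next I would compute these global sections through the colimit presentation of $K(A)$. Consider first a single localization $A_s=colim(A\overset{s}{\longrightarrow}A\overset{s}{\longrightarrow}\cdots)$ for $0\ne s\in\mathcal E(A)$. By Lemma \ref{L3} the map $s$ is a monomorphism, so this is a filtered colimit of monomorphisms; applying $Hom_{A-Mod}(A,\_\_)$ and using that the unit $A$ is finitely generated (so that $Hom_{A-Mod}(A,\_\_)$ preserves filtered colimits of monomorphisms) gives
\[
\mathcal E(A_s)\cong Hom_{A-Mod}(A,A_s)\cong colim\left(\mathcal E(A)\overset{\cdot s}{\longrightarrow}\mathcal E(A)\overset{\cdot s}{\longrightarrow}\cdots\right)=\mathcal E(A)_s,
\]
the localization of the ring $\mathcal E(A)$ at $s$, where I have used that post-composition with $s$ induces multiplication by $s$ on $\mathcal E(A)=Hom_{A-Mod}(A,A)$. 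The same reasoning applied to $K(A)=colim_{0\ne s}A_s$ of \eqref{E1}, whose transition maps $A_s\longrightarrow A_{st}$ are again monomorphisms, being further localizations at monomorphisms and filtered colimits of monomorphisms being monomorphisms (as filtered colimits commute with finite limits, hence preserve kernels), yields $\mathcal E(K(A))\cong colim_{0\ne s}\mathcal E(A_s)\cong colim_{0\ne s}\mathcal E(A)_s$.

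Finally, the indexing set $\mathcal E(A)\setminus\{0\}$, ordered by divisibility, is exactly the multiplicative system of nonzero elements of the domain $\mathcal E(A)$, so $colim_{0\ne s}\mathcal E(A)_s$ is the localization of $\mathcal E(A)$ at all of its nonzero elements, that is $\mathrm{Frac}(\mathcal E(A))$. Since $\mathcal E(A)$ is an integral domain this colimit is a field, and in particular it is nonzero, so $\mathcal E(K(A))\cong\mathrm{Frac}(\mathcal E(A))$ is a field, as claimed.

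The delicate point — and the reason Lemma \ref{L3} is proved just beforehand — is the interchange of $Hom_{A-Mod}(A,\_\_)$ with the two filtered colimits. This interchange is licensed only for filtered colimits of \emph{monomorphisms}, so one genuinely needs that every nonzero $s\in\mathcal E(A)$ is a monomorphism in order to know that the transition maps in the systems defining $A_s$ and $K(A)$ are monomorphisms; this is precisely the content of Lemma \ref{L3}. The one further categorical input is the finite generation of the unit $A$ (equivalently, that $Hom_{A-Mod}(A,\_\_)$ preserves filtered colimits of monomorphisms), which is what allows the $Hom$ to pass inside the colimit. One should also verify that the identifications $\mathcal E(A_s)\cong\mathcal E(A)_s$ and $\mathcal E(K(A))\cong colim_s\mathcal E(A_s)$ are isomorphisms of rings rather than merely of abelian groups; this is routine, since each of $A_s\longrightarrow A_{st}$ and $A_s\longrightarrow K(A)$ induces a ring homomorphism on endomorphisms of the unit, and these are compatible with the colimit.
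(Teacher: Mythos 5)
Your proposal is correct and follows essentially the same route as the paper: reduce $\mathcal E(K(A))$ to $Hom_{A-Mod}(A,K(A))$ by base change, use Lemma \ref{L3} to see that the systems defining $A_s$ and $K(A)$ are filtered colimits of monomorphisms, and then commute $Hom_{A-Mod}(A,\_\_)$ past them to identify the result with $Q(\mathcal E(A))$. The only detail you leave implicit is why $A$ is finitely generated in $A-Mod$; the paper notes this follows because $A\cong colim(0\longleftarrow 0\longrightarrow A)$ is finitely presented and $A$ is Noetherian.
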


\begin{proof} It is clear that $A\cong colim(0\longleftarrow 0\longrightarrow A)$ is finitely presented in $A-Mod$. Since
$A$ is Noetherian, it follows that   $A$ is also finitely generated in $A-Mod$.  By definition, $A_s=colim(A\overset{s}{\longrightarrow}A\overset{s}{\longrightarrow}...)$ for each $0\ne s\in \mathcal E(A)$. Then, since each $0\ne s\in \mathcal E(A)$ is a monomorphism, it follows
that $\mathcal E(A_s)=Hom_{A_s-Mod}(A_s,A_s)\cong Hom_{A-Mod}(A,A_s)\cong \mathcal E(A)_s$. For any $0\ne t
\in \mathcal E(A)$, the monomorphism $t:A\longrightarrow A$ induces a monomorphism of filtered colimits $t:A_s
\longrightarrow A_s$. It follows that we have monomorphisms $A_s\longrightarrow A_{st}$ for $0\ne s,t\in \mathcal E(A)$. Again, 
considering the filtered colimit of monomorphisms defining $K(A)$ in \eqref{E1}, we get
$\mathcal E(K(A))=Hom_{K(A)-Mod}(K(A),K(A))\cong Hom_{A-Mod}(A,K(A))=Q(\mathcal E(A))$ where $Q(\mathcal E(A))$ is the
field of fractions of the integral domain $\mathcal E(A)$. 

\end{proof} 

\begin{thm}\label{P1} If $A\in Comm(\mathcal C)$ is Noetherian and integral, $K(A)$ is Noetherian. Further, $K(A)$ has no non-zero proper subobjects in $K(A)-Mod$.
\end{thm}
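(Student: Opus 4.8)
The plan is to prove the two assertions in the reverse of the order stated, since the Noetherian property of $K(A)$ will fall out once we know that $K(A)$ has no non-zero proper subobjects. The guiding principle throughout is that $K(A)$ ought to behave like the one-dimensional vector space over the field $\mathcal E(K(A))=Hom_{K(A)-Mod}(K(A),K(A))$ produced by Lemma \ref{Lem2}.

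For the subobject statement I would fix a subobject $i:N\hookrightarrow K(A)$ in $K(A)-Mod$ with $N\ne 0$ and show that $i$ is an isomorphism. Applying the left exact functor $Hom_{K(A)-Mod}(K(A),\_\_)$ to $i$ gives a monomorphism $i_*:Hom_{K(A)-Mod}(K(A),N)\hookrightarrow\mathcal E(K(A))$ which is linear for the action of $\mathcal E(K(A))$ by precomposition. Since $\mathcal E(K(A))$ is a field by Lemma \ref{Lem2} and this action is its own multiplication, the image of $i_*$ is an ideal of $\mathcal E(K(A))$, hence either $0$ or all of $\mathcal E(K(A))$. In the latter case $\mathrm{id}_{K(A)}$ lies in the image, so $\mathrm{id}_{K(A)}=i\circ g$ for some $g:K(A)\to N$; then $i$ is simultaneously a monomorphism and a split epimorphism, hence an isomorphism, and $N=K(A)$.

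It thus remains to exclude $Hom_{K(A)-Mod}(K(A),N)=0$, and this is the step I expect to be the main obstacle. The idea is to exploit local finite generation: since $N\ne 0$, it is the filtered colimit of its finitely generated submodules and so contains a non-zero finitely generated submodule $P$, whence an epimorphism $K(A)^n\twoheadrightarrow P$. As $P\ne 0$ this map is non-zero, and composing with $P\hookrightarrow N$ produces a non-zero element of $Hom_{K(A)-Mod}(K(A)^n,N)\cong Hom_{K(A)-Mod}(K(A),N)^n$, at least one component of which is a non-zero map $K(A)\to N$. The delicate point is precisely that a non-zero finitely generated submodule must receive a non-zero map from the free module $K(A)$, i.e. that the free modules $K(A)^n$ detect non-zero subobjects; this is where the interaction between finite generation and the free generators, and hence the standing hypotheses on $\mathcal C$, must be used with care.

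Finally, for the Noetherian assertion I would bootstrap from the subobject result. Given a finitely generated $M\in K(A)-Mod$, choose an epimorphism $K(A)^n\twoheadrightarrow M$ with kernel $R\hookrightarrow K(A)^n$; it suffices to show that $R$ is finite free, for then $M\cong colim(0\longleftarrow R\longrightarrow K(A)^n)$ exhibits $M$ as finitely presented. I would prove by induction on $n$ that every submodule of $K(A)^n$ is isomorphic to a finite direct sum of copies of $K(A)$: projecting onto the last coordinate carries $R$ onto a subobject of $K(A)$, which by the first part is $0$ or $K(A)$ and is therefore projective, while the kernel of this projection is a submodule of $K(A)^{n-1}$ and is finite free by induction; the resulting short exact sequence splits because its quotient term is projective, giving $R$ finite free. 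Hence $M$ is finitely presented, and since the reverse implication is immediate, $K(A)$ is Noetherian.
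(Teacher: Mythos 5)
Your plan inverts the order of the paper's proof (subobjects first, Noetherian second), and this inversion is where it breaks. Both halves of your argument lean on the claim that a (non-zero) finitely generated $K(A)$-module admits an epimorphism from, or at least a non-zero map from, some $K(A)^n$. In this setting that is not part of the definition of ``finitely generated'': finite generation is defined by the functor $Hom_{K(A)-Mod}(M,\_\_)$ preserving filtered colimits of monomorphisms, and the generators of $K(A)-Mod$ are the objects $K(A)\otimes C$ for $C\in\mathcal C$, not the finite free modules $K(A)^n$ alone (think of a finitely generated sheaf of modules that is not globally generated). The only route to a presentation $K(A)^n\twoheadrightarrow M$ is through ``finitely presented,'' i.e.\ $M\cong colim(0\longleftarrow K(A)^m\longrightarrow K(A)^n)$, and the equivalence of finitely generated with finitely presented over $K(A)$ is precisely the Noetherian statement you intend to deduce \emph{afterwards}. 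So the ``delicate point'' you flag --- that a non-zero finitely generated submodule of $K(A)$ must receive a non-zero map from $K(A)$ --- is not a technicality to be smoothed over; it is the content of the proposition, and your proposal leaves it unproved (and your Noetherian step, which presupposes the same presentations, cannot supply it without circularity). A smaller ordering issue: your induction also invokes projectivity of $K(A)$ to split the sequence for $R$, which in the paper is Proposition \ref{P2.6}, proved only after and by means of \ref{P1}.

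The paper escapes this by proving Noetherian-ness first, and by a mechanism you do not use: descent to $A$-modules. Given a finitely generated $K(A)$-module $N$, it writes $N$ as a filtered colimit of its finitely generated, hence (by Noetherian-ness of $A$) finitely presented, $A$-submodules $N_i$; since $A\longrightarrow K(A)$ is a flat epimorphism of commutative monoids, $N\cong N\otimes_AK(A)\cong colim_i\,(N_i\otimes_AK(A))$ is a filtered colimit of monomorphisms, so finite generation of $N$ forces $N\cong N_{i_0}\otimes_AK(A)$, which is finitely presented over $K(A)$ by base change of the presentation of $N_{i_0}$. Only then does it run (essentially your) vector-space argument for the subobject claim: with finitely generated $=$ finitely presented in hand, $Hom_{K(A)-Mod}(K(A),I)=0$ propagates to $Hom_{K(A)-Mod}(M,I)=0$ for all finitely presented $M$ and then, via local finite generation, to all $M$, giving $I=0$ by Yoneda. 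If you want to salvage your outline, you must either prove the Noetherian assertion first by some such descent to $A$, or find an independent argument that $K(A)$ (rather than the full family $K(A)\otimes C$) detects non-zero subobjects of $K(A)$.
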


\begin{proof}  Since $A\cong colim(0\longleftarrow 0\longrightarrow A)$ is finitely presented and $A$ is Noetherian, $A$ is finitely generated in $A-Mod$. Then, the functor $Hom_{K(A)-Mod}(K(A),\_\_)
=Hom_{K(A)-Mod}(A\otimes_AK(A),\_\_)\cong Hom_{A-Mod}(A,\_\_)$ on the category $K(A)-Mod$
preserves filtered colimits of monomorphisms. It follows that $K(A)$ (and hence any finitely presented $K(A)$-module) is finitely generated in 
$K(A)-Mod$. 

\smallskip
Conversely, let $N$ be a finitely generated $K(A)$-module. We express $N$ as a filtered colimit $colim_{i\in I}N_i$ of its finitely presented $A$-submodules. The universal property of $K(A)$ implies that $A\longrightarrow K(A)$ is an epimorphism in $Comm(\mathcal C)$
 and
it follows that $K(A)\otimes_AK(A)\cong K(A)$. 
Then: 
\begin{equation}
N\cong N\otimes_{K(A)}K(A)\cong N\otimes_{K(A)}(K(A)\otimes_AK(A))
\cong N\otimes_AK(A)=colim_{i\in I}N_i\otimes_AK(A)
\end{equation} Since $K(A)$ is a flat $A$-module (see \cite[$\S$ 3]{AB4}), $\{N_i\otimes_AK(A)\}_{i\in I}$ is still a filtered system of monomorphisms. Since
$N$ is finitely generated in $K(A)-Mod$, it now follows that 
$N\cong N_{i_0}\otimes_AK(A)$ for some $i_0\in I$. Since $N_{i_0}$ is a finitely presented $A$-module, $N$ becomes a finitely presented $K(A)$-module. Thus, $K(A)$ is Noetherian. 

\smallskip
Finally, let $i:I\longrightarrow K(A)$ be a monomorphism in $K(A)-Mod$. Then, the morphism 
$i_{K(A)}:Hom_{K(A)-Mod}(K(A),I)\longrightarrow Hom_{K(A)-Mod}(K(A),K(A))$ is a monomorphism
of vector spaces over the field $\mathcal E(K(A))$. Hence, $i_{K(A)}$ is either $0$ or an 
isomorphism. If $i_{K(A)}=0$, then $i_M:Hom_{K(A)-Mod}(M,I)\longrightarrow Hom_{K(A)-Mod}(M,K(A))$
is $0$ for any finitely presented $M\in A-Mod$ and hence for any $M\in A-Mod$. Then, $i=0$ and hence $I=0$. Similarly, if $i_K$ is an isomorphism, it follows that so is $i$. 
\end{proof} 

\begin{thm}\label{P2.6} Let $A\in Comm(\mathcal C)$ be a Noetherian, integral commutative monoid object. Then, 
$K(A)$ is projective as a $K(A)$-module.
\end{thm}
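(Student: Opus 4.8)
The plan is to prove projectivity of $K(A)$ by showing that every short exact sequence $0\longrightarrow L\longrightarrow P\overset{q}{\longrightarrow} K(A)\longrightarrow 0$ in $K(A)-Mod$ splits, which in an abelian category is equivalent to $K(A)$ being projective. To split such a sequence it suffices to produce a section of $q$, and here I would use Lemma \ref{Lem2}, which tells us that $\mathcal E(K(A))=Hom_{K(A)-Mod}(K(A),K(A))$ is a field. Indeed, if there is a single $\phi\in Hom_{K(A)-Mod}(K(A),P)$ with $q\circ\phi\neq 0$, then $q\circ\phi$ is a nonzero endomorphism of $K(A)$, hence invertible in the field $\mathcal E(K(A))$, so that $\sigma:=\phi\circ(q\circ\phi)^{-1}$ is a section of $q$. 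Thus the entire problem reduces to showing that the map $q_*:=Hom_{K(A)-Mod}(K(A),q)$ is nonzero.

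To establish $q_*\neq 0$ I would argue by contradiction and reproduce, in the variable $P$, the mechanism from the last paragraph of Proposition \ref{P1}. Suppose $q_*=0$. For a finitely presented $M\cong colim(0\longleftarrow K(A)^m\longrightarrow K(A)^n)$, the presentation gives a monomorphism $Hom_{K(A)-Mod}(M,K(A))\hookrightarrow Hom_{K(A)-Mod}(K(A)^n,K(A))$ together with the naturality square relating $Hom_{K(A)-Mod}(M,q)$ to $Hom_{K(A)-Mod}(K(A)^n,q)=(q_*)^n=0$. Chasing this square shows $Hom_{K(A)-Mod}(M,q)=0$ for every finitely presented $M$. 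Next I would pass to an arbitrary $M\in K(A)-Mod$ by writing $M\cong colim_{j}M_j$ as a filtered colimit of finitely presented modules; this is legitimate because $K(A)$ is Noetherian by Proposition \ref{P1} and $\mathcal C$ is locally finitely generated, exactly as invoked in the proof of \ref{P1}. Converting the colimit into a limit, $Hom_{K(A)-Mod}(M,q)=\lim_{j}Hom_{K(A)-Mod}(M_j,q)=0$, so $Hom_{K(A)-Mod}(M,q)=0$ for all $M$.

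Finally I would specialize to $M=P$: since $Hom_{K(A)-Mod}(P,q)(\mathrm{id}_P)=q$, the vanishing forces $q=0$. But $q$ is an epimorphism onto $K(A)$, and $K(A)\neq 0$ because its endomorphism object $\mathcal E(K(A))$ is a field, hence a nonzero ring; this is a contradiction. Therefore $q_*\neq 0$, a section exists, the sequence splits, and $K(A)$ is projective. The main obstacle is the propagation step from $K(A)$ to all of $K(A)-Mod$: one must be careful that Noetherianness genuinely provides the filtered-colimit presentation by finitely presented modules and that the resulting limit of zero maps is again zero, which is precisely the point where the hypotheses of \ref{P1} (Noetherian plus locally finitely generated) are doing the real work.
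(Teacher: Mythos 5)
Your proof is correct and follows essentially the same strategy as the paper's: both hinge on the fact that a map of $\mathcal E(K(A))$-vector spaces into the one-dimensional space $Hom_{K(A)-Mod}(K(A),K(A))$ is either zero or surjective, with the zero case excluded by the finitely-presented/filtered-colimit propagation argument borrowed from the proof of Proposition \ref{P1}. The only (harmless) difference is in the packaging: you reduce to splitting short exact sequences ending in $K(A)$ via the standard pullback equivalence, whereas the paper verifies the lifting property directly by pulling back over $Q=\mathrm{Im}(f)$ and invoking the absence of nontrivial subobjects of $K(A)$ to identify $Q\cong K(A)$.
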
 

\begin{proof} We consider an epimorphism $e:M\longrightarrow N$ in $K(A)-Mod$ and any  morphism $0\ne f:K(A)\longrightarrow N$. 
We set $Q:=Im(f)$ and consider the following pullback in $K(A)-Mod$:
\begin{equation}
P:=lim(M\overset{e}{\longrightarrow}N\longleftarrow Q=Im(f))
\end{equation}
Since $K(A)-Mod$ is abelian, the pullback $e':P\longrightarrow Q$ of $e$  is an epimorphism. Further, since
$K(A)$ has no non-trivial subobjects in $K(A)-Mod$ and $f\ne 0$, we must have $Ker(f)=0$ and hence $Q=Im(f)\cong K(A)$. Since 
the induced morphism $e'_{K(A)}:Hom_{K(A)-Mod}(K(A),P)\longrightarrow Hom_{K(A)-Mod}(K(A),Q)\cong Hom_{K(A)-Mod}(K(A),K(A))
=\mathcal E(K(A))$ is a morphism of vector spaces over the field $\mathcal E(K(A))$, $e'_{K(A)}$ is either $0$ or an epimorphism. 
If  the morphism $e'_{K(A)}:Hom_{K(A)-Mod}(K(A),P)\longrightarrow Hom_{K(A)-Mod}(K(A),Q)$ is $0$, we can show as in the proof of 
Proposition \ref{P1} that the epimorphism $e':P\longrightarrow Q\cong K(A)$ is $0$. This contradicts the fact that $f\ne 0$.  Hence,
$e'_{K(A)}:Hom_{K(A)-Mod}(K(A),P)\longrightarrow Hom_{K(A)-Mod}(K(A),Q)$ must be an epimorphism. Thus, 
$f:K(A)\longrightarrow Q=Im(f)$ lifts to $P$ and it is clear that $f:K(A)\longrightarrow N$ lifts to $M$. 

\end{proof} 

\begin{lem}\label{LemmaX} Let $A\in Comm(\mathcal C)$ be a Noetherian, integral commutative monoid object.  Then, every monomorphism in $K(A)-Mod$ splits. 
\end{lem}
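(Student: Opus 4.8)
The plan is to show that $K(A)-Mod$ is semisimple, so that the statement reduces to the usual direct-summand criterion. The two facts I would lean on are that $K(A)$ is a \emph{simple} object that is moreover \emph{projective}: by Proposition \ref{P1} it has no non-zero proper subobjects, and by Proposition \ref{P2.6} it is projective in $K(A)-Mod$. Using these I would prove that \emph{every} object $M\in K(A)-Mod$ is a sum of subobjects each isomorphic to $K(A)$, and then split a given monomorphism by a Zorn's lemma argument.

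First I would record the elementary consequence of simplicity: any non-zero morphism $K(A)\to X$ is a monomorphism with image isomorphic to $K(A)$, since its kernel is a subobject of $K(A)$, hence $0$ or $K(A)$, and its image is then a subobject of $X$ isomorphic to $K(A)$. Next I would show that every non-zero $X\in K(A)-Mod$ admits a non-zero morphism from $K(A)$. As $\mathcal C$ is locally finitely generated, $X$ is the filtered colimit of its finitely generated submodules, so a non-zero $X$ contains a non-zero finitely generated submodule $X'$. Since $K(A)$ is Noetherian (Proposition \ref{P1}), $X'$ is finitely presented, i.e. $X'\cong\mathrm{coker}(K(A)^m\xrightarrow{q}K(A)^n)$, and the canonical epimorphism $K(A)^n\twoheadrightarrow X'$ is non-zero, so at least one of its components $K(A)\to X'\hookrightarrow X$ is non-zero.

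With these in hand, let $\mathrm{soc}(X)\subseteq X$ be the sum of the images of all morphisms $K(A)\to X$, a sum of copies of $K(A)$. The crucial step is $\mathrm{soc}(X)=X$, and this is where projectivity enters. If the quotient $X/\mathrm{soc}(X)$ were non-zero, the previous paragraph would produce a non-zero $h:K(A)\to X/\mathrm{soc}(X)$; since $K(A)$ is projective and $X\twoheadrightarrow X/\mathrm{soc}(X)$ is an epimorphism, $h$ lifts to a necessarily non-zero $\tilde h:K(A)\to X$. But then $\mathrm{im}(\tilde h)\subseteq\mathrm{soc}(X)$ by definition of the socle, forcing $h=0$, a contradiction. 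Hence every object of $K(A)-Mod$ is a sum of copies of the simple object $K(A)$.

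Finally I would split a monomorphism $i:I\to M$ by the standard semisimple argument. Writing $M=\sum_j S_j$ with each $S_j\cong K(A)$, I would choose by Zorn's lemma a maximal subfamily whose sum $N:=\sum_j S_j$ is internally direct and satisfies $I\cap N=0$; here the assumption that filtered colimits are exact guarantees that both conditions pass to directed unions, as filtered colimits of monomorphisms remain monomorphisms and commute with the finite limits computing these intersections. Maximality forces each $S_j\subseteq I+N$, for otherwise $S_j\cap(I+N)$ is a proper subobject of the simple $S_j$, hence $0$, and $S_j$ could be adjoined. Therefore $M=I\oplus N$ and $i$ splits. The main obstacle is the middle step $\mathrm{soc}(X)=X$: it is precisely the projectivity of $K(A)$ (Proposition \ref{P2.6}) that lets one lift maps out of the quotient and thereby rule out a non-trivial socle quotient.
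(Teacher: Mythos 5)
Your argument is correct, but it takes a genuinely different route from the paper. The paper splits a given monomorphism $i:M\rightarrow N$ directly: it first produces a retraction of $i_{K(A)}$ on $Hom_{K(A)-Mod}(K(A),-)$ using linear algebra over the field $\mathcal E(K(A))$, then propagates this retraction to $Hom_{K(A)-Mod}(G,-)$ for finitely presented $G$ via a presentation $G\cong colim(0\leftarrow K(A)^m\rightarrow K(A)^n)$ (projectivity of $K(A)$ ensuring independence of the choice of presentation), then to arbitrary $G$ by writing $G$ as a filtered colimit of finitely presented submodules, and finally invokes the Yoneda lemma. You instead prove that $K(A)-Mod$ is semisimple: $K(A)$ is a simple projective object by Propositions \ref{P1} and \ref{P2.6}, every non-zero object receives a non-zero map from $K(A)$ (via local finite generation, the Noetherianity of $K(A)$ from Proposition \ref{P1}, and the resulting presentation of a non-zero finitely generated submodule), the socle argument then shows every object is a sum of copies of $K(A)$, and the classical Zorn/modular-lattice argument splits any monomorphism. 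Both proofs are sound and use the same three inputs (simplicity, projectivity, local finite generation plus Noetherianity); the trade-off is that your route invokes Zorn's lemma and the exactness of filtered colimits to handle arbitrary sums of subobjects, but in exchange it sidesteps the naturality-in-$G$ verification implicit in the paper's Yoneda step and delivers a strictly stronger structural conclusion --- semisimplicity of $K(A)-Mod$, with every object (not merely every finitely generated one, as in Proposition \ref{PZ}) a direct sum of copies of $K(A)$ --- from which the splitting of both monomorphisms and epimorphisms is immediate.
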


\begin{proof} We consider a monomorphism $i:M\longrightarrow N$ in $K(A)-Mod$ and the induced monomorphism $i_{K(A)}:
Hom_{K(A)-Mod}(K(A),M)\longrightarrow Hom_{K(A)-Mod}(K(A),N)$ of $\mathcal E(K(A))$-vector spaces. Hence, there is a morphism
$p_{K(A)}:
Hom_{K(A)-Mod}(K(A),N)\longrightarrow Hom_{K(A)-Mod}(K(A),M)$ of $\mathcal E(K(A))$-vector spaces such that
$p_{K(A)}\circ i_{K(A)}=1$. For any $K(A)$-module $G$, we consider the induced morphism $i_G:Hom_{K(A)-Mod}(G,M)
\longrightarrow Hom_{K(A)-Mod}(G,N)$. If $G$ is finitely presented, it can be expressed as a colimit $G\cong colim(0\longleftarrow K(A)^m
\longrightarrow K(A)^n)$ and hence $p_{K(A)}:Hom_{K(A)-Mod}(K(A),N)\longrightarrow Hom_{K(A)-Mod}(K(A),M)$ 
induces a morphism:
\begin{equation}\label{eqn2.4}
\begin{CD}
Hom_{K(A)-Mod}(G,N)\cong lim(0\rightarrow Hom_{K(A)-Mod}(K(A)^m,N)\leftarrow Hom_{K(A)-Mod}(K(A)^n,N))\\
@Vp_GVV \\ Hom_{K(A)-Mod}(G,M)\cong lim(0\rightarrow  Hom_{K(A)-Mod}(K(A)^m,M)\leftarrow Hom_{K(A)-Mod}(K(A)^n,M))\\
\end{CD}
\end{equation} such that $p_G\circ i_G=1$. Note that since $K(A)$ is projective, the morphism $p_G$ does not depend on
the choice of the presentation $G\cong colim(0\longleftarrow K(A)^m
\longrightarrow K(A)^n)$. Finally, since any $G\in A-Mod$ can be expressed as a filtered colimit of its finitely presented
submodules, we obtain a morphism $p_G:Hom_{K(A)-Mod}(G,N)\longrightarrow Hom_{K(A)-Mod}(G,M)$ such that
$p_G\circ i_G=1$ for each $G\in A-Mod$. By Yoneda Lemma, this induces  a morphism $p:N\longrightarrow M$ such that
$p\circ i=1$. 
\end{proof} 

\begin{thm}\label{PZ} Let $A\in Comm(\mathcal C)$ be a Noetherian, integral commutative monoid object.  Then, every finitely generated
$K(A)$-module is isomorphic to a direct sum $K(A)^q$ for some integer $q\geq 0$.
\end{thm}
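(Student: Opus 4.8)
The plan is to mimic the classical proof that a finitely generated projective module over a field is free, using $F(-):=Hom_{K(A)-Mod}(K(A),-)$ as a ``dimension'' functor. By Lemma \ref{Lem2} this $F$ takes values in vector spaces over the field $\mathcal E(K(A))$, and since $K(A)$ is projective by Proposition \ref{P2.6}, $F$ is exact. The strategy is: (i) realize a finitely generated $N$ as a direct summand of a finite free module, so that $F(N)$ is finite dimensional; (ii) lift a basis of $F(N)$ to a map $\phi:K(A)^q\to N$; and (iii) show $\phi$ is an isomorphism by checking that $F(\ker\phi)$ and $F(\mathrm{coker}\,\phi)$ vanish and that $F$ detects vanishing of summands of free modules.

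First I would record the structural input. Let $N$ be a finitely generated $K(A)$-module. Since $K(A)$ is Noetherian (Proposition \ref{P1}), $N$ is finitely presented, so there is an epimorphism $K(A)^n\twoheadrightarrow N$; its kernel is a monomorphism into $K(A)^n$, which splits by Lemma \ref{LemmaX}, so $N$ is a direct summand of $K(A)^n$. Applying the additive exact functor $F$ then exhibits $F(N)$ as a direct summand of $F(K(A)^n)\cong\mathcal E(K(A))^n$, hence a finite dimensional $\mathcal E(K(A))$-vector space; set $q:=\dim_{\mathcal E(K(A))}F(N)$.

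The second ingredient is that $F$ is fully faithful on finite free modules. For $a,b\geq 0$ the canonical identification writes $Hom_{K(A)-Mod}(K(A)^a,K(A)^b)$ as the $b\times a$ matrices over $\mathcal E(K(A))$, and under it $F$ carries a morphism to the linear map with the same matrix; thus $F$ restricts to a bijection $Hom_{K(A)-Mod}(K(A)^a,K(A)^b)\cong Hom_{\mathcal E(K(A))}(\mathcal E(K(A))^a,\mathcal E(K(A))^b)$. In particular $F$ is faithful on endomorphisms of a finite free module, so if $P$ is a direct summand of some $K(A)^r$ with $F(P)=0$, then the idempotent cutting out $P$ is killed by $F$ and therefore vanishes, forcing $P=0$. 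This is the device that lets $F$ detect vanishing.

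Finally I would build the isomorphism. Choosing a basis $e_1,\dots,e_q$ of $F(N)=Hom_{K(A)-Mod}(K(A),N)$ and assembling the $e_i$ into $\phi:K(A)^q\longrightarrow N$, the map $F(\phi)$ sends the standard basis of $\mathcal E(K(A))^q$ to $e_1,\dots,e_q$ and so is an isomorphism. Factoring $\phi$ through its image and applying the exact functor $F$ forces $F(\ker\phi)=0$ and $F(\mathrm{coker}\,\phi)=0$. Now $\ker\phi\hookrightarrow K(A)^q$ splits by Lemma \ref{LemmaX}, so $\ker\phi$ is a summand of a finite free module and hence $\ker\phi=0$ by the previous paragraph; thus $\phi$ is a monomorphism, and splitting it (Lemma \ref{LemmaX} again) exhibits $\mathrm{coker}\,\phi$ as a summand of $N$, hence of $K(A)^n$, so $\mathrm{coker}\,\phi=0$ as well, giving $\phi:K(A)^q\xrightarrow{\sim}N$. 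I expect the detection-of-vanishing step to be the main obstacle: the definition of Noetherian adopted here does not supply the usual fact that submodules of finitely generated modules are finitely generated, so a naive generator count is unavailable, and it is precisely the interplay of Lemma \ref{LemmaX} (to realize kernels and cokernels as summands of free modules) with the full faithfulness of $F$ on free modules that substitutes for the classical rank argument.
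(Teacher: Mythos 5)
Your proof is correct, but it reaches the conclusion by a different mechanism than the paper. Both arguments begin the same way: use the Noetherian property to get an epimorphism $K(A)^n\twoheadrightarrow N$, split it via Lemma \ref{LemmaX}, and read off $Hom_{K(A)-Mod}(K(A),N)\cong \mathcal E(K(A))^q$ as a finite-dimensional vector space. They diverge afterwards. The paper never constructs an explicit comparison map at the module level: it takes the abstract linear isomorphism $j_{K(A)}:Hom_{K(A)-Mod}(K(A),N)\to \mathcal E(K(A))^q$ and, exactly as in the proof of Lemma \ref{LemmaX}, propagates it to isomorphisms $j_M:Hom_{K(A)-Mod}(M,N)\to Hom_{K(A)-Mod}(M,K(A)^q)$ first for finitely presented $M$ (via the limit description of $Hom$ out of a presentation, using $\mathcal E(K(A))$-linearity of $j_{K(A)}$ for compatibility) and then for all $M$ by filtered colimits, finally invoking the Yoneda Lemma to produce the isomorphism $N\cong K(A)^q$. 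You instead lift a basis to an explicit morphism $\phi:K(A)^q\to N$ and kill $\ker\phi$ and $\mathrm{coker}\,\phi$ using exactness of $Hom_{K(A)-Mod}(K(A),\_\_)$ (from Proposition \ref{P2.6}) together with the observation that this functor is faithful on endomorphisms of finite free modules, so that a summand of a free module with vanishing image under the functor must itself vanish. Your route is more explicit and avoids the naturality bookkeeping hidden in the paper's ``as in the proof of Lemma \ref{LemmaX}'' step, at the cost of introducing the idempotent-detection lemma; the paper's route reuses its existing Yoneda-extension machinery and so is shorter on the page. Both are sound.
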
 

\begin{proof} Since monomorphisms split in $K(A)-Mod$, so do epimorphisms. Since $K(A)$ is Noetherian, any finitely generated (and hence finitely presented) 
$K(A)$-module $G$ carries an epimorphism from some $K(A)^n$. This epimorphism splits and hence we have a monomorphism 
$i:G\longrightarrow K(A)^n$. Then, $i$ induces a monomorphism $i_{K(A)}:Hom_{K(A)-Mod}(K(A),G)
\hookrightarrow Hom_{K(A)-Mod}(K(A),K(A)^n)=\mathcal E(K(A))^n$ of $\mathcal E(K(A))$-vector spaces, from which
it follows that we have an isomorphism $j_{K(A)}:Hom_{K(A)-Mod}(K(A),G)
\overset{\cong}{\longrightarrow} \mathcal E(K(A))^q=Hom_{K(A)-Mod}(K(A),K(A)^q)$ for some $q\leq n$.  Then, as in the proof
of Lemma \ref{LemmaX}, we are able to obtain isomorphisms $j_M:Hom_{K(A)-Mod}(M,G)\longrightarrow Hom_{K(A)-Mod}(M,K(A)^q)$ for
each $M\in K(A)-Mod$. By Yoneda Lemma, we now have an isomorphism $j:G\overset{\cong}{\longrightarrow} K(A)^q$. 
\end{proof}  

\begin{thm}\label{PY} Let $i:U\longrightarrow Spec(K(A))$ be a Zariski open immersion. Then, either $U=Spec(0)$ or
$i$ is an isomorphism.
\end{thm}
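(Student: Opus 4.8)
The plan is to reduce to the affine case and there exploit the fact, established in Propositions \ref{P1} and Lemma \ref{LemmaX}, that $K(A)$ behaves like a field: it has no nonzero proper subobjects in $K(A)-Mod$ and every monomorphism in $K(A)-Mod$ splits. By Definition \ref{D1} the scheme $U$ admits a covering $\coprod_j Spec(B_j)\longrightarrow U$ by affine Zariski open immersions $\alpha_j:Spec(B_j)\longrightarrow U$. Composing with $i$, each $i\circ\alpha_j:Spec(B_j)\longrightarrow Spec(K(A))$ is again a Zariski open immersion of affine schemes, hence corresponds to a morphism $u_j:K(A)\longrightarrow B_j$ in $Comm(\mathcal C)$ that is a flat epimorphism. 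I would therefore first settle the following affine statement: if $u:K(A)\longrightarrow B$ is a flat epimorphism in $Comm(\mathcal C)$, then either $B=0$ or $u$ is an isomorphism.

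For the affine case, I would view $u$ as a morphism of $K(A)$-modules and examine $Ker(u)$, a subobject of $K(A)$ in $K(A)-Mod$. By Proposition \ref{P1}, $Ker(u)$ is either $0$ or $K(A)$. If $Ker(u)=K(A)$ then $u=0$, so the unit of $B$ vanishes and $B=0$. Otherwise $u$ is a monomorphism, which splits by Lemma \ref{LemmaX}, giving $B\cong K(A)\oplus C$ with $C=coker(u)$. The heart of the argument is then to exploit the epimorphism condition: since $u$ is an epimorphism in $Comm(\mathcal C)$, the multiplication $m:B\otimes_{K(A)}B\longrightarrow B$ is an isomorphism, and since $m\circ(u\otimes id_B)=id_B$ the map $u\otimes id_B:B\longrightarrow B\otimes_{K(A)}B$ is an isomorphism as well. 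Tensoring the split short exact sequence $0\longrightarrow K(A)\overset{u}{\longrightarrow}B\longrightarrow C\longrightarrow 0$ with the flat module $B$ then shows $C\otimes_{K(A)}B=0$. But $C$ is a direct summand of $C\otimes_{K(A)}B$ (since $K(A)$ is a summand of $B$), whence $C=0$ and $u$ is an isomorphism.

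Finally I would glue. Applying the affine case to each $u_j$, every chart is either $Spec(0)$ or maps isomorphically onto $Spec(K(A))$. If every $B_j=0$, then the covering $\coprod_j Spec(B_j)\longrightarrow U$ is an epimorphism out of the initial object, forcing $U=Spec(0)$. If on the other hand some $i\circ\alpha_j:Spec(B_j)\longrightarrow Spec(K(A))$ is an isomorphism, then $s:=\alpha_j\circ(i\circ\alpha_j)^{-1}$ is a section of $i$; since Zariski open immersions are monomorphisms and a monomorphism admitting a section is an isomorphism, $i$ is an isomorphism.

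I expect the one genuinely nontrivial step to be the affine computation, specifically the passage $C\otimes_{K(A)}B=0\Rightarrow C=0$ powered by flatness, the epimorphism identity $m\circ(u\otimes id_B)=id_B$, and the simplicity of $K(A)$ from Proposition \ref{P1}; the reduction to affine charts via Definition \ref{D1} and the concluding gluing are formal. It is worth noting that this approach uses only flatness and the epimorphism property of a Zariski open immersion, and never invokes finite presentation of the morphism.
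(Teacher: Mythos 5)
Your proposal is correct and follows essentially the same route as the paper: in the affine case both arguments split the monomorphism $K(A)\longrightarrow B$ using Lemma \ref{LemmaX}, write $B\cong K(A)\oplus T$, and use the epimorphism property $B\otimes_{K(A)}B\cong B$ of a Zariski open immersion to force the complement to vanish. Your gluing step (a section of a monomorphism makes it an isomorphism) is a cosmetic variant of the paper's pullback argument and is equally valid.
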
 

\begin{proof} First we suppose that $U$ is affine, say $U=Spec(B)$ and $B\ne 0$. Since $K(A)$ has no non-trivial subobjects,
the induced map $K(A)\longrightarrow B$ is a monomorphism in $K(A)-Mod$. The monomorphism splits by Lemma \ref{LemmaX} and we
may express $B$ as a direct sum $B=K(A)\oplus T$ for some $T\in K(A)-Mod$. Then, we have:
\begin{equation}\label{Eq2.5X}
B\otimes_{K(A)}B=(K(A)\otimes_{K(A)}B)\oplus (T\otimes_{K(A)}B)=(K(A)\otimes_{K(A)}B) \oplus (T\otimes_{K(A)}K(A))\oplus 
(T\otimes_{K(A)}T)
\end{equation} Since $Spec(B)\longrightarrow Spec(K(A))$ is a Zariski immersion, $K(A)\longrightarrow B$ is an epimorphism
in $Comm(\mathcal C)$ and hence the canonical morphism $ B\otimes_{K(A)}B
\longrightarrow B\cong K(A)\otimes_{K(A)}B$ is an isomorphism. It follows that $T=T\otimes_{K(A)}K(A)=0$ and hence $B\cong K(A)$. 

\smallskip
In general, if $U$ is not affine, we choose some non-trivial Zariski open $V$ in $U$. Then, from the above reasoning, we know that $V\longrightarrow Spec(K(A))$ is an isomorphism and hence so is its  pullback $U\times_{Spec(K(A))}V\longrightarrow U$. Noticing that 
$U\times_{Spec(K(A))}V=U\times_UV=V$, we have $U\cong V\cong Spec(K(A))$ and the result follows.  

\end{proof} 

We will now show that if $X$ is a Noetherian integral scheme over $(\mathcal C,\otimes,1)$, every non-trivial Zariski
affine open $Spec(A)=U\in ZarAff(X)$ of $X$ gives us  the same field of fractions. 

\begin{thm}\label{PX} Let $X$ be a Noetherian integral scheme over $(\mathcal C,\otimes,1)$. Let $Spec(B)\longrightarrow Spec(A)$ be a morphism in $ZarAff(X)$ with $B\ne 0$. Then, $K(B)\cong B\otimes_AK(A)\cong K(A)$. 
\end{thm}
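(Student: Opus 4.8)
The goal is to show that for a morphism $Spec(B)\longrightarrow Spec(A)$ in $ZarAff(X)$ with $B\neq 0$, we have $K(B)\cong B\otimes_A K(A)\cong K(A)$. The plan is to exploit the fact that such a morphism is a Zariski open immersion, which by the Toën--Vaquié formalism means the induced map $g:A\longrightarrow B$ in $Comm(\mathcal C)$ is an epimorphism and that $B$ is a flat, finitely presented $A$-module.

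First I would establish the middle isomorphism $K(B)\cong B\otimes_A K(A)$. The key observation is that $K(A)$ is obtained from $A$ by inverting every nonzero $s\in\mathcal E(A)$, so $B\otimes_A K(A)$ is obtained from $B$ by inverting the images $\mathcal E(g)(s)$. I would check that these images lie in $\mathcal E(B)\setminus\{0\}$: since $g:A\longrightarrow B$ is an epimorphism in $Comm(\mathcal C)$, the map $\mathcal E(g):\mathcal E(A)\longrightarrow\mathcal E(B)$ is injective (it is the map on global endomorphism rings induced by an epimorphism of commutative monoids, hence monic), so no nonzero $s$ maps to $0$. Because $B$ is integral (as $Spec(B)\in ZarAff(X)$ and $X$ is integral), $\mathcal E(B)$ is a domain, and inverting the multiplicatively closed image of $\mathcal E(A)\setminus\{0\}$ inside $\mathcal E(B)$ produces a localization of $B$ that receives a map from $B\otimes_A K(A)$ and satisfies the universal property. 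The cleanest route is to verify that $B\otimes_A K(A)$ satisfies the universal property of $K(B)$ directly, or alternatively to produce maps in both directions using the universal property of localization from \cite[$\S$ 3]{AB4}; I would use the flatness of $K(A)$ over $A$ to commute the tensor product past the filtered colimit defining $K(A)$, giving $B\otimes_A K(A)\cong\mathrm{colim}_{s}\,B_{\mathcal E(g)(s)}$, and then identify this colimit with $K(B)$.

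Next I would prove $B\otimes_A K(A)\cong K(A)$, which is the heart of the statement. Here I would transport the problem to $K(A)-Mod$ and apply the structure theory already developed. Base-changing the epimorphism $g:A\longrightarrow B$ along $A\longrightarrow K(A)$ yields an epimorphism $K(A)\longrightarrow B\otimes_A K(A)$ in $Comm(\mathcal C)$ (epimorphisms of commutative monoids are stable under cobase change), and the underlying map $K(A)\longrightarrow B\otimes_A K(A)$ is a map in $K(A)-Mod$. By Proposition \ref{P1}, $K(A)$ has no non-zero proper subobjects, so this map is either zero or a monomorphism; since $B\neq 0$ and $K(A)$ is flat, $B\otimes_A K(A)\neq 0$, forcing a monomorphism. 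By Lemma \ref{LemmaX} this monomorphism splits, so I may write $B\otimes_A K(A)\cong K(A)\oplus T$ in $K(A)-Mod$. I would then run exactly the computation from the proof of Proposition \ref{PY}: since $K(A)\longrightarrow B\otimes_A K(A)$ is an epimorphism in $Comm(\mathcal C)$, the multiplication map $(B\otimes_A K(A))\otimes_{K(A)}(B\otimes_A K(A))\longrightarrow B\otimes_A K(A)$ is an isomorphism, and expanding via the direct sum decomposition as in \eqref{Eq2.5X} forces $T=0$. Hence $B\otimes_A K(A)\cong K(A)$.

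The main obstacle I anticipate is the first isomorphism $K(B)\cong B\otimes_A K(A)$, specifically the bookkeeping of which endomorphisms get inverted. One must be careful that $K(B)$ inverts all of $\mathcal E(B)\setminus\{0\}$, whereas $B\otimes_A K(A)$ a priori only inverts the image $\mathcal E(g)(\mathcal E(A)\setminus\{0\})$, which could be strictly smaller. However, once the second isomorphism $B\otimes_A K(A)\cong K(A)$ is in hand, this difficulty dissolves: $B\otimes_A K(A)\cong K(A)$ already has the property that $\mathcal E$ of it is a field (Lemma \ref{Lem2}), so every nonzero endomorphism is already invertible there, and the universal property then identifies it with $K(B)$. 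Thus I would prove the two isomorphisms in the order $B\otimes_A K(A)\cong K(A)$ first, and then deduce $K(B)\cong B\otimes_A K(A)$ by checking the universal property against the now-known field-like structure, since the localization of $B$ inverting all of $\mathcal E(B)\setminus\{0\}$ cannot be larger than a monoid object whose endomorphisms already form a field.
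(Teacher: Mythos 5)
Your overall strategy matches the paper's: establish $B\otimes_AK(A)\cong K(A)$ first (the paper does this by observing that $Spec(B\otimes_AK(A))\longrightarrow Spec(K(A))$ is the pullback of a Zariski open immersion and citing Proposition \ref{PY}, while you re-run the splitting computation of \eqref{Eq2.5X} directly --- these are equivalent), and then identify $K(B)$ with $B\otimes_AK(A)$ via the universal property of the field of fractions. However, one of your justifications is wrong as stated and needs repair. You claim that $\mathcal E(g):\mathcal E(A)\longrightarrow\mathcal E(B)$ is injective \emph{because} $g$ is an epimorphism in $Comm(\mathcal C)$; this implication is false in general (for ordinary rings, $\mathbb Z\longrightarrow\mathbb Z/2$ is an epimorphism of commutative rings and induces a non-injective map on endomorphism rings). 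The correct argument, and the one the paper uses, is that $B$ is a flat $A$-module (being a Zariski open) and every $0\ne s\in\mathcal E(A)$ is a monomorphism by Lemma \ref{L3}, so $s\otimes_AB$ is again a monomorphism and in particular nonzero. A related slip: the nonvanishing $B\otimes_AK(A)\ne 0$ follows from tensoring the monomorphism $A\longrightarrow K(A)$ with the flat module $B$, i.e.\ it is the flatness of $B$ over $A$ that is doing the work there, not the flatness of $K(A)$.

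Your final step is also more compressed than it should be. To identify $B\otimes_AK(A)$ with $K(B)$ by the universal property you must verify two things: that every $0\ne t\in\mathcal E(B)$ maps to a \emph{nonzero} (hence invertible) element of the field $\mathcal E(B\otimes_AK(A))$, and that an arbitrary $h:B\longrightarrow C$ inverting $\mathcal E(B)\setminus\{0\}$ factors uniquely through $B\otimes_AK(A)$. Your remark that the localization ``cannot be larger than a monoid object whose endomorphisms already form a field'' gestures at the first point but does not address the second; the paper handles it by factoring $h\circ g$ through $K(A)$ via the universal property of $K(A)$ (which requires the injectivity of $\mathcal E(g)$ discussed above) and then cancelling the epimorphism $g$ to conclude that $h$ itself factors through $B\otimes_AK(A)\cong K(A)$. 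With these repairs your argument goes through and is essentially the paper's proof.
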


\begin{proof} From Lemma \ref{L3}, we know that any $0\ne s\in \mathcal E(A)$ is a monomorphism. Considering the filtered
colimits defining $A_s$ and $K(A)$, the canonical morphism $A\rightarrow K(A)$ is a monomorphism. Since $B$ is a flat $A$-module,
we have an induced monomorphism $B\cong B\otimes_AA\rightarrow B\otimes_AK(A)$ from which it follows that 
$B\otimes_AK(A)\ne 0$. But $Spec(B)\rightarrow Spec(A)$ being a Zariski open immersion, so is its pullback
$Spec(B\otimes_AK(A))\rightarrow Spec(K(A))$ along the morphism $Spec(K(A))\rightarrow Spec(A)$. From 
Proposition \ref{PY}, it now follows that $B\otimes_AK(A)\cong K(A)$.

Let $g:A\longrightarrow B$ be the morphism in $Comm(\mathcal C)$ underlying the morphism $Spec(B)\longrightarrow Spec(A)$
in $ZarAff(X)$. Then, since $B$ is flat and each $0\ne s\in \mathcal E(A)$ is a monomorphism, so is $s\otimes_AB\in
\mathcal E(B)$. Hence $\mathcal E(g):\mathcal E(A)\longrightarrow \mathcal E(B)$ is an injection. Then it follows that if $h:
B\longrightarrow C$ in $Comm(\mathcal C)$ takes every non-zero element in $\mathcal E(B)$ to a unit in $\mathcal E(C)$, $\mathcal E(h\circ g)$
takes every non-zero element in $\mathcal E(A)$ to a unit in $\mathcal E(C)$. From the universal property of $K(A)$, the composition
$h\circ g:A\longrightarrow C$ factors uniquely through some $h':K(A)\longrightarrow C$. The following compositions 
are now equal in $Comm(\mathcal C)$:
\begin{equation}\label{PQX}
A\overset{g}{\longrightarrow} B\overset{h}{\longrightarrow}C \qquad
A\overset{g}{\longrightarrow} B\longrightarrow B\otimes_AK(A)\cong K(A)\overset{h'}{\longrightarrow}C
\end{equation} Since $g:A\longrightarrow B$ corresponds to a Zariski open immersion, $g$ is an epimorphism
in $Comm(\mathcal C)$. It now follows from \eqref{PQX} that $h:B\longrightarrow C$ factors uniquely
through $B\otimes_AK(A)=K(A)$. From the universal property of $K(B)$, we see that
$K(B)\cong B\otimes_AK(A)\cong K(A)$.
\end{proof} 

\begin{thm} \label{PZZ} Let $X$ be a Noetherian integral scheme over $(\mathcal C,\otimes,1)$. Then, $X$ is irreducible. 
\end{thm}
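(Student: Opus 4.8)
The plan is to read irreducibility of $X$ in its standard form: $X$ is irreducible precisely when it is not the union of two proper closed subobjects, equivalently when any two non-trivial Zariski open subobjects $V_1,V_2\hookrightarrow X$ satisfy $V_1\times_XV_2\neq Spec(0)$. First I would reduce to affine opens. Since $X$ carries a Zariski atlas by affine schemes, and these form a basis for its opens, any non-trivial open $V_i$ contains a non-trivial affine open $Spec(A_i)\in ZarAff(X)$ with $A_i\neq 0$; and if $V_1\times_XV_2=Spec(0)$, then, because $Spec(A_i)\hookrightarrow V_i$ gives $Spec(A_1)\times_XSpec(A_2)\hookrightarrow V_1\times_XV_2$, a subobject of the initial object is itself $Spec(0)$. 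Thus it suffices to show that $X$ admits no pair of disjoint non-trivial affine opens.

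The heart of the matter is that integrality, as imposed in Definition \ref{D2}, is incompatible with such a disjoint pair. Suppose $U_1=Spec(A_1)$ and $U_2=Spec(A_2)$ are disjoint, with $A_1,A_2\neq 0$. Because $U_1\times_XU_2=Spec(0)$, the union $U_1\cup U_2$ inside $X$ coincides with the coproduct $U_1\sqcup U_2$; and in $Aff_{\mathcal C}=Comm(\mathcal C)^{op}$ this coproduct is represented by the product monoid, so $U_1\sqcup U_2\cong Spec(A_1\times A_2)$. Since a disjoint union of two Zariski open immersions is again a Zariski open immersion, $Spec(A_1\times A_2)$ is a non-trivial affine open of $X$ and hence lies in $ZarAff(X)$. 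Now $X$ integral forces $A_1\times A_2$ to be integral, i.e. $\mathcal E(A_1\times A_2)$ to be an ordinary integral domain. But the decomposition $(A_1\times A_2)-Mod\simeq A_1-Mod\times A_2-Mod$ yields $\mathcal E(A_1\times A_2)\cong\mathcal E(A_1)\times\mathcal E(A_2)$ as rings; since $A_1,A_2\neq 0$, both factors are non-zero (each contains $1=\mathrm{id}\neq 0$), so this product ring carries the non-trivial idempotents $(1,0)$ and $(0,1)$ and cannot be a domain. This contradiction rules out disjoint non-trivial affine opens, and by the reduction above $X$ is irreducible.

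The step I expect to demand the most care is the passage from the topological slogan ``disjoint pieces produce idempotents'' to the present setting: namely verifying, inside $Sh(Aff_{\mathcal C})$, that the union of the two disjoint open immersions is genuinely the affine scheme $Spec(A_1\times A_2)$, so that it is an object of $ZarAff(X)$ on which Definition \ref{D2} may be applied, together with the ring identification $\mathcal E(A_1\times A_2)\cong\mathcal E(A_1)\times\mathcal E(A_2)$. Both become formal once one knows that finite coproducts of affines are affine and are computed by products of monoids, and that Zariski open immersions are stable under finite disjoint unions; but these are exactly the structural facts on which the whole argument leans. An alternative route, should one prefer to draw on the results already in hand, is to manufacture a generic point $Spec(K(A))\to X$ from a single affine open and propagate it through intersections using Proposition \ref{PY} and Proposition \ref{PX}; that approach, however, must separately exclude disconnected behaviour, whereas the product-monoid argument disposes of it in one stroke.
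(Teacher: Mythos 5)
Your proof is correct, but it takes a genuinely different route from the paper's. The paper's argument has two stages: it first proves that every affine chart $Spec(A)$ of $X$ is irreducible by tensoring with the function field --- the computation $(A_1\otimes_AA_2)\otimes_AK(A)\cong K(A)\ne 0$ coming from Proposition \ref{PX}, together with the monomorphism $A\rightarrow K(A)$ --- and only then considers a hypothetical pair of disjoint non-trivial opens $V=Spec(B)$, $W=Spec(C)$ of $X$. The chart-wise irreducibility is used precisely to certify that $V\coprod W\longrightarrow X$ is a Zariski open immersion: over each chart $U$ at least one of $V\times_XU$, $W\times_XU$ is trivial, so the pullback of $V\coprod W$ to $U$ is a single affine Zariski open. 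From there the paper lands on exactly your contradiction: $Spec(B\oplus C)=V\coprod W\in ZarAff(X)$ and $\mathcal E(B\oplus C)=\mathcal E(B)\oplus\mathcal E(C)$ is a product of two non-zero rings, hence not a domain. You bypass the first stage entirely by invoking two structural facts about the To\"{e}n--Vaqui\'{e} topology --- that a disjoint union of Zariski open immersions is again a Zariski open immersion, and that $Spec(A_1)\coprod Spec(A_2)\cong Spec(A_1\times A_2)$ --- which you correctly identify as the load-bearing steps. Both do hold (the first because a Zariski open subobject of an affine is by definition a union of affine Zariski opens, the second by descent along the conservative cover $\{A_1\times A_2\rightarrow A_i\}$ of flat epimorphisms of finite presentation), and the paper itself silently uses the second when it writes $Spec(B\oplus C)=V\coprod W$. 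What your route buys is brevity and complete independence from the $K(A)$ machinery of Propositions \ref{PY} and \ref{PX}; what the paper's route buys is that it only ever needs to recognize a single affine Zariski open as a pullback, never a disjoint union of two non-trivial ones, so it leans less on unproved stability properties of the Zariski topology on $Sh(Aff_{\mathcal C})$. For your version to stand on its own you should supply proofs, or precise references to \cite{TV}, for those two structural facts.
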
 

\begin{proof} Choose $U=Spec(A)\in ZarAff(X)$ with $A\ne 0$ and consider  affine opens $Spec(A_1)$, $Spec(A_2)\in ZarAff(Spec(A))
\subseteq ZarAff(X)$. As in the proof of Proposition \ref{PX}, we have a monomorphism $A\longrightarrow K(A)$
which shows that $K(A)\ne 0$. From Proposition \ref{PX}, we now note that:
\begin{equation}\label{QPX}
(A_1\otimes_AA_2)\otimes_AK(A)\cong A_1\otimes_A(A_2\otimes_AK(A))\cong A_1\otimes_AK(A)\cong K(A)\ne 0
\end{equation} from which it is clear that $A_1\otimes_AA_2\ne 0$. Hence, $Spec(A)$ is irreducible.

\smallskip
Now suppose that $X$ is not irreducible; then we can choose $Spec(B)=V\in ZarAff(X)$, $Spec(C)=W\in ZarAff(X)$
with $B\ne 0$, $C\ne 0$ such that $V\times_XW=Spec(0)$. Then, $(V\times_XU)\times_U(W\times_XU)=(V\times_XW)\times_XU
=Spec(0)$. Since $U$ is irreducible, at least one of $V\times_XU$ and $W\times_XU$ is trivial. It follows that the pullback $(V\times_XU)\coprod (W\times_XU)\longrightarrow U=Spec(A)$ 
of the canonical morphism $p:V\coprod W\longrightarrow X$ along any Zariski immersion $U=Spec(A)\longrightarrow X$
must be a Zariski immersion.  Then, $Spec(B\oplus C)=V\coprod W\in ZarAff(X)$. Hence, $\mathcal E(B\oplus C)
=\mathcal E(B)\oplus \mathcal E(C)$ must be an integral domain which is a contradiction. 

\end{proof} 

From Proposition \ref{PX} and \ref{PZZ}, it follows that for any $Spec(A)$, $Spec(B)\in ZarAff(X)$ with $A\ne 0$, $B\ne 0$, 
we have $K(A)\cong K(B)$. Hence, this common field of fractions
may be treated as the ``function field'' $K(X)$ of $X$. We also see that Propositions \ref{PZ} and \ref{PY}
further bring out the fact that $K(X)$ satisfies many properties similar to  ordinary fields, which helps justify the idea
that this common field of fractions should indeed be treated as the ``function field'' of $X$.  In \cite{AB5}, we have already constructed a field $k(X)$
for an integral scheme over $(\mathcal C,\otimes,1)$ without the Noetherian assumption.  The elements of the field $k(X)$ are equivalence classes of pairs $(U,t_U)$, with $Spec(0)\ne Spec(A)=U\in ZarAff(X)$, $t_U\in \mathcal E(A)$; for $U$, $V\in ZarAff(X)$, we say $(U,t_U)\sim 
(V,t_V)$ if there exists non-trivial $W\in ZarAff(U\times_XV)$ such that the restrictions of $t_U$ and $t_V$ to $W$
are identical. However, the object $k(X)$
obtained in \cite[$\S$ 4]{AB5} is an ordinary field, whereas in this paper we have obtained something stronger:  a commutative monoid object
$K(X)$ of $Comm(\mathcal C)$ with several field like properties as seen in Proposition \ref{PZ} and \ref{PY}. We will  show in Proposition \ref{Pop} how the field $k(X)$ constructed in \cite[$\S$ 4]{AB5} may be recovered from $K(X)$.

\smallskip
On the other hand, it is clear that an integral scheme $X$ over $(\mathcal C,\otimes,1)$  is ``reduced'', i.e., for any $Spec(A)\in ZarAff(X)$ with $A\ne 0$, $\mathcal E(A)$ must be a reduced ring. From Proposition \ref{PZZ} we see that a Noetherian integral scheme over $(\mathcal C,\otimes,1)$ is also irreducible. We can therefore say that a Noetherian integral scheme over $(\mathcal C,\otimes,1)$ is reduced and irreducible. The Noetherian hypothesis plays a key role in the results above. In essence, since our notion of integrality in Definition \ref{D1} for commutative monoid objects in $(\mathcal C,\otimes,1)$  is really ``at the 
level of global sections'', it seems that in order to obtain results analogous to those for ordinary schemes, the notion of integrality
needs to be strengthened with the additional assumption of being Noetherian. We also note that the main assumption on $(\mathcal C,\otimes,1)$ that we have used so far is that $\mathcal C$ must be locally finitely generated. We now present some examples where this conditions applies:

\medskip
\noindent {\bf Examples:} (a) For a sheaf $\mathcal A$ of rings on a topological space  $Y$  with a basis of compact open sets (say a locally Noetherian space), the  category  of sheaves of 
$\mathcal A$-modules is locally finitely generated by \cite[Theorem 3.5]{Prest}. 

\medskip
(b) If $Y=[0,1]$ and $\mathcal A_Y$ is the sheaf of continuous real valued functions on $Y$, the category
$\mathcal A_Y-Mod$ of sheaves of $\mathcal A_Y$-modules is locally finitely generated
(see \cite[Proposition 5.5]{Prest}).

\medskip
(c) If $Y$ is a topological space and $\mathcal A$ is a presheaf of commutative rings on $Y$, the category 
$\mathcal A-Premod$ of presheaves of $\mathcal A$-modules is locally finitely generated (see \cite[Corollary 2.15]{PA}). 

\medskip
We would now like to show the converse, i.e, a Noetherian scheme over $(\mathcal C,\otimes,1)$ that is reduced and irreducible is also integral. For this, we will need to make an additional assumption. First of all, we note that for any Noetherian $A\in Comm(\mathcal C)$, $A$ is a finitely generated object of $A-Mod$, i.e., the functor
$Hom_{A-Mod}(A,\_\_)$ preserves filtered colimits of monomorphisms in $A-Mod$. In order to proceed further, we will need to make the stronger assumption that any $A\in Comm(\mathcal C)$ is actually a compact object of $A-Mod$, i.e.,
the functor $Hom_{A-Mod}(A,\_\_)$ on $A-Mod$ preserves all filtered colimits in $A-Mod$ (and not just filtered colimits of 
monomorphisms). This is true, for instance, in the situation of Example (a) when the topological space is also compact, i.e., when $\mathcal C$ is the category of
$\mathcal A$-modules for a sheaf $\mathcal A$ of commutative rings on a compact topological space $Y$ with a basis of compact open sets (see \cite[Corollary 3.4]{Prest}).

\begin{thm} Let $X$ be a reduced, irreducible and Noetherian scheme over $(\mathcal C,\otimes,1)$. Suppose that
for any $A\in Comm(\mathcal C)$, $A$ is a compact object of $A-Mod$. Then, $X$ is also an integral scheme over
$(\mathcal C,\otimes,1)$. 
\end{thm}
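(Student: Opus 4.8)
The plan is to verify the integrality condition directly: I must show that for every nontrivial affine open $Spec(A)=U\in ZarAff(X)$ with $A\ne 0$, the ring $\mathcal E(A)=Hom_{A-Mod}(A,A)$ is an integral domain. Since $A\ne 0$, the identity gives $1\ne 0$ in $\mathcal E(A)$, so $\mathcal E(A)\ne 0$. Because $X$ is reduced, $\mathcal E(A)$ is a reduced ring, and it therefore suffices to rule out zero divisors. Accordingly, I would argue by contradiction: suppose there exist $0\ne s,t\in\mathcal E(A)$ with $st=0$, and derive a contradiction with the irreducibility of $X$.

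The heart of the argument is the computation of $\mathcal E(A_s)$ for the localization $A_s=colim(A\overset{s}{\longrightarrow}A\overset{s}{\longrightarrow}\dots)$. In Lemma \ref{Lem2} the identification $\mathcal E(A_s)\cong\mathcal E(A)_s$ was obtained under the integrality hypothesis, using that each nonzero $s$ is a monomorphism (Lemma \ref{L3}); here I cannot invoke that, since integrality is precisely what I am trying to establish, and indeed $s$ need not be a monomorphism. This is exactly where the compactness hypothesis enters. Using the adjunction between extension and restriction of scalars along $A\longrightarrow A_s$, together with $A\otimes_AA_s\cong A_s$, I get $\mathcal E(A_s)\cong Hom_{A-Mod}(A,A_s)$. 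Since $A$ is a compact object of $A-Mod$, the functor $Hom_{A-Mod}(A,\_\_)$ commutes with the sequential filtered colimit defining $A_s$ even though its transition maps are not monomorphisms; hence
\begin{equation*}
\mathcal E(A_s)\cong Hom_{A-Mod}\left(A,colim\left(A\overset{s}{\longrightarrow}A\overset{s}{\longrightarrow}\dots\right)\right)\cong colim\left(\mathcal E(A)\overset{s}{\longrightarrow}\mathcal E(A)\overset{s}{\longrightarrow}\dots\right)\cong\mathcal E(A)_s.
\end{equation*}
Now $A_s=0$ if and only if $\mathcal E(A_s)=0$ (the identity of $A_s$ must vanish), and $\mathcal E(A)_s=0$ if and only if $s$ is nilpotent in $\mathcal E(A)$; since $\mathcal E(A)$ is reduced, this forces $A_s\ne 0$ for every $0\ne s\in\mathcal E(A)$. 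Thus each $Spec(A_s)$ is a nontrivial Zariski affine open of $Spec(A)$, and hence of $X$.

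To produce the contradiction, I would compare the opens $Spec(A_s)$ and $Spec(A_t)$ inside $Spec(A)$. Their intersection is the fibre product $Spec(A_s)\times_{Spec(A)}Spec(A_t)\cong Spec(A_s\otimes_AA_t)$, and writing both localizations as colimits over $\mathbb N$ and passing to the cofinal diagonal of $\mathbb N\times\mathbb N$ identifies $A_s\otimes_AA_t\cong A_{st}=colim(A\overset{st}{\longrightarrow}A\overset{st}{\longrightarrow}\dots)$. But $st=0$, so the transition maps are zero and $A_{st}=0$. Since $Spec(A)\longrightarrow X$ is an open immersion, the intersection of $Spec(A_s)$ and $Spec(A_t)$ inside $X$ coincides with this fibre product, so we obtain two nontrivial Zariski opens of $X$ whose intersection is $Spec(0)$, contradicting the irreducibility of $X$ (exactly as exploited in the proof of Proposition \ref{PZZ}). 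Hence no such zero divisors exist, $\mathcal E(A)$ is an integral domain, and $X$ is integral.

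I expect the main obstacle to be the colimit computation $\mathcal E(A_s)\cong\mathcal E(A)_s$ in the absence of the monomorphism property of $s$; this is the step that the compactness assumption is designed to handle, replacing the role played by Lemma \ref{L3} in the integral case. The remaining ingredients — the scalar-extension adjunction, the identification $A_s\otimes_AA_t\cong A_{st}$ through cofinality of the diagonal, and the translation of $st=0$ into a trivial intersection of opens — are formal, and the hypotheses on $X$ that the argument actually calls upon are that it be reduced and irreducible.
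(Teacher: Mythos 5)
Your proposal is correct and follows essentially the same route as the paper: contradiction via a pair of zero divisors $s,t$, the compactness-driven identification $\mathcal E(A_s)\cong\mathcal E(A)_s$ combined with reducedness to get $A_s\ne 0$ and $A_t\ne 0$, and then irreducibility applied to $Spec(A_s)\times_X Spec(A_t)=Spec(A_{st})=Spec(0)$. The only step you leave implicit is that $Spec(A_s)\longrightarrow Spec(A)$ is actually a Zariski open immersion (so that $Spec(A_s)$ qualifies as an object of $ZarAff(X)$); the paper justifies this by again invoking the compactness of $A$ in $A-Mod$ via a citation to \cite[Proposition 2.5]{AB5}.
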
 

\begin{proof} Suppose  $X$ is not integral; then we can find some non-trivial $Spec(A)\in ZarAff(X)$ and some
$s$, $t\in \mathcal E(A)$ such that $st=0$ but $s\ne 0$ and $t\ne 0$. We will show that $A_{st}\ne 0$ which contradicts
the fact that $st=0$. Since $\mathcal E(A)$ is reduced,
neither $s$ nor $t$ is nilpotent. Hence, the ordinary localizations $\mathcal E(A)_s\ne 0$ and 
$\mathcal E(A)_t\ne 0$. Further since $A$ is a compact object of $A-Mod$, it follows from \cite[Corollary 2.8]{AB5}
that $\mathcal E(A_s)=\mathcal E(A)_s$ and $\mathcal E(A_t)=\mathcal E(A)_t$. Hence, $A_s\ne 0$
and $A_t\ne 0$. Again using the fact that $A$ is compact in $A-Mod$, it follows from \cite[Proposition 2.5]{AB5}
that $Spec(A_s)\longrightarrow Spec(A)$ and $Spec(A_t)\longrightarrow Spec(A)$ are Zariski open immersions.  Now, since $X$ is irreducible, it follows that:
\begin{equation}
Spec(A_{st})=Spec(A_s\otimes_AA_t)=Spec(A_s)\times_{Spec(A)}Spec(A_t)=Spec(A_s)\times_XSpec(A_t)\ne Spec(0)
\end{equation} 
\end{proof} 

\medskip

Our next result shows that the field $k(X)$ constructed in our previous paper \cite{AB5} may be recovered from the commutative
monoid object $K(X)$ constructed herein. 
\medskip

\begin{thm}\label{Pop} Let $X$ be a Noetherian integral scheme over $(\mathcal C,\otimes,1)$. Suppose that
for any $A\in Comm(\mathcal C)$, $A$ is a compact object of $A-Mod$. Then, $\mathcal E(K(X))\cong k(X)$. 
\end{thm}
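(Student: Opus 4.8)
The plan is to fix one non-trivial affine open $U_0=Spec(A)\in ZarAff(X)$ and to realize both $k(X)$ and $\mathcal E(K(X))$ as copies of the ordinary fraction field $Q(\mathcal E(A))$ of the integral domain $\mathcal E(A)$, whence the identification $\mathcal E(K(X))\cong k(X)$ will drop out. On the one hand, Proposition \ref{PX} and \ref{PZZ} give $K(X)\cong K(A)$, so Lemma \ref{Lem2} yields
\[
\mathcal E(K(X))=\mathcal E(K(A))\cong Q(\mathcal E(A)).
\]
On the other hand, the description of $k(X)$ recalled above exhibits it as the filtered colimit $\mathrm{colim}_{Spec(B)=V\in ZarAff(X),\,B\ne 0}\,\mathcal E(B)$ of the integral domains $\mathcal E(B)$ along the restriction maps, which are injective by the proof of Proposition \ref{PX}, the filteredness being a consequence of the irreducibility established in Proposition \ref{PZZ}.

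First I would construct a comparison morphism $\phi\colon k(X)\longrightarrow \mathcal E(K(X))$. For a representative $(V,t_V)$ with $V=Spec(B)$, $B\ne 0$ and $t_V\in\mathcal E(B)$, I send it to the image of $t_V$ under the canonical map $\mathcal E(B)\to\mathcal E(K(B))\cong Q(\mathcal E(B))$, followed by the canonical identification $\mathcal E(K(B))\cong\mathcal E(K(X))$ furnished by Proposition \ref{PX}. The key point is well-definedness on equivalence classes: if $(V,t_V)\sim(V',t_{V'})$, the two functions restrict to the same element over some non-trivial $W=Spec(D)\in ZarAff(V\times_XV')$, and the functoriality of $B\mapsto K(B)$ together with Proposition \ref{PX} guarantees that the identifications $\mathcal E(K(B))\cong\mathcal E(K(D))\cong\mathcal E(K(X))$ are compatible with restriction. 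Since $\phi$ is a unital homomorphism of commutative rings, $k(X)$ is a field by \cite[$\S$ 4]{AB5}, and $\mathcal E(K(X))$ is a field by Lemma \ref{Lem2}, the map $\phi$ is automatically injective.

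It then remains to prove surjectivity, and this is where the compactness hypothesis enters. Every element of $\mathcal E(K(X))\cong Q(\mathcal E(A))$ has the form $a/b$ with $a,b\in\mathcal E(A)$ and $b\ne 0$. Since $\mathcal E(A)$ is an integral domain, $b$ is not nilpotent, so $\mathcal E(A)_b\ne 0$; because $A$ is a compact object of $A-Mod$, \cite[Corollary 2.8]{AB5} gives $\mathcal E(A_b)=\mathcal E(A)_b$, so $A_b\ne 0$, and \cite[Proposition 2.5]{AB5} shows $Spec(A_b)\to Spec(A)$ is a Zariski open immersion, hence $Spec(A_b)$ is a non-trivial object of $ZarAff(X)$. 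Viewing $a/b$ inside $\mathcal E(A_b)=\mathcal E(A)_b$, the pair $(Spec(A_b),a/b)$ defines a class in $k(X)$ whose image under $\phi$ is exactly $a/b$. Thus $\phi$ is surjective, and being an injective surjective ring homomorphism it is the desired isomorphism.

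I expect the main obstacle to be the verification that $\phi$ is well defined, i.e. that the various identifications $\mathcal E(K(B))\cong\mathcal E(K(X))$ assemble compatibly over the colimit defining $k(X)$; this is really a naturality statement for $B\mapsto\mathcal E(K(B))$ with respect to the Zariski restriction maps, and it is precisely what Proposition \ref{PX} supplies. The role of the compactness assumption is confined to the surjectivity step, where, exactly as in the preceding theorem, it is needed to recognize the basic localizations $Spec(A_b)$ as genuine non-trivial Zariski opens of $X$ carrying the expected endomorphism rings; injectivity, by contrast, is free once one knows both sides are fields.
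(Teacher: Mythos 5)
Your proposal is correct and follows essentially the same route as the paper: both identify $\mathcal E(K(X))$ with $Q(\mathcal E(A))$ via Lemma \ref{Lem2} and Propositions \ref{PX}--\ref{PZZ}, send a class $(V,t_V)$ to the corresponding element of the fraction field, and invert this by sending $a/t$ to the pair $(Spec(A_t),a/t)$, with compactness used exactly where you use it, namely to get $\mathcal E(A_t)=\mathcal E(A)_t$ and to recognize $Spec(A_t)$ as a non-trivial Zariski open. The only difference is that you make explicit the well-definedness on equivalence classes and the automatic injectivity of a field homomorphism, points the paper leaves implicit.
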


\begin{proof} We consider some non-trivial $Spec(A)=U\in ZarAff(X)$ and a pair $(U,t_U)\in k(X)$. Then, $t_U\in \mathcal E(A)$. We know that $K(X)\cong K(A)$. From the proof of Lemma \ref{Lem2}, we know that $\mathcal E(K(A))=Q(\mathcal E(A))$, the field of fractions  of 
$\mathcal E(A)$. Hence, $t_U\in \mathcal E(A)$ corresponds to an element of $Q(\mathcal E(A))=\mathcal E(K(A))=\mathcal E(K(X))$. Conversely, any element of $\mathcal E(K(X))=Q(\mathcal E(A))$  may be expressed as a quotient $a/t$ where $a$, $t\in \mathcal E(A)$
and $t\ne 0$. But then, $a/t\in \mathcal E(A)_t=\mathcal E(A_t)$ for the Zariski affine $Spec(A_t)\in ZarAff(X)$. 
\end{proof} 

\medskip
Let $X$ and $Y$ be Noetherian integral schemes over $(\mathcal C,\otimes,1)$ and let $k(X)\longrightarrow k(Y)$
be a morphism of ordinary fields. However, such a morphism of fields does not contain enough information; in the sense
that such a morphism cannot be used to construct a corresponding (dominant, rational) map of schemes over
$(\mathcal C,\otimes,1)$ from $Y$ to $X$. As an application of our methods, we now show that this task may be
accomplished by considering the ``internal function field objects'' $K(X)$ and $K(Y)$ in $Comm(\mathcal C)$ constructed
in this paper. By a rational map from $Y$ to $X$, we will mean a morphism $\phi:V\longrightarrow X$ for some given non-trivial
$V\in ZarAff(Y)$. We will say that $\phi$ is dominant if for any non-trivial $U\in ZarAff(X)$, the pullback
$U\times_XV$ is non-trivial. 

\medskip
\begin{defn} We say that  a commutative monoid object $A\in Comm(\mathcal C)$ is of finite type if we have an isomorphism $colim_{i\in I}Hom_{Comm(\mathcal C)}
(A,A_i)\overset{\cong}{\longrightarrow}Hom_{Comm(\mathcal C)}(A,colim_{i\in I}A_i)$   for any filtered system 
$\{A_i\}_{i\in I}$ in $Comm(\mathcal C)$. 

\medskip
A scheme $X$ will be said to be of finite type over $(\mathcal C,\otimes,1)$ if $A\in Comm(\mathcal C)$ is of finite type for each $Spec(A)\in ZarAff(X)$;  
\end{defn}

\medskip

\begin{Thm}Suppose that
for any $A\in Comm(\mathcal C)$, $A$ is a compact object of $A-Mod$.  Let $X$, $Y$ be  Noetherian integral schemes of finite type over $(\mathcal C,\otimes,1)$. Then, there exists a bijection between morphisms $K(X)\longrightarrow K(Y)$ in $Comm(\mathcal C)$ and dominant rational maps from $Y$ to $X$. 
\end{Thm}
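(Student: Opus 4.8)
The plan is to construct explicit maps in both directions between $Hom_{Comm(\mathcal C)}(K(X),K(Y))$ and the set of dominant rational maps from $Y$ to $X$, and then to verify that they are mutually inverse. As usual for rational maps, I take these up to the equivalence that identifies two maps agreeing on a common non-trivial open. Throughout I would use the fact, established after Proposition \ref{PZZ}, that $K(X)\cong K(A)$ for every non-trivial $Spec(A)\in ZarAff(X)$ (and likewise for $Y$), together with the compactness hypothesis, which via \cite{AB5} guarantees $\mathcal E(A_s)\cong\mathcal E(A)_s$ and that $Spec(A_s)\longrightarrow Spec(A)$ is a Zariski open immersion for $0\ne s\in\mathcal E(A)$.

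For the direction from a dominant rational map to a morphism, given $\phi:V\longrightarrow X$ with $V=Spec(B)$, I would first fix a non-trivial affine open $U=Spec(A)\subseteq X$; by dominance $U\times_XV$ is non-trivial, so I may choose a non-trivial affine open $W=Spec(C)\subseteq U\times_XV$, which then lies in $ZarAff(Y)$. Writing $g:A\longrightarrow C$ for the induced morphism, the key step is to show that $\mathcal E(g):\mathcal E(A)\longrightarrow\mathcal E(C)$ is injective. For $0\ne s\in\mathcal E(A)$ the open $Spec(A_s)$ is non-trivial, hence so is $Spec(A_s)\times_XV$ by dominance; since $V$ is irreducible (Proposition \ref{PZZ}) its intersection with $W$ is non-trivial, and identifying that intersection with $Spec(C\otimes_AA_s)=Spec(C_{\mathcal E(g)(s)})$ and using compactness forces $\mathcal E(g)(s)\ne 0$ in the domain $\mathcal E(C)$. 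Composing $g$ with $C\longrightarrow K(C)$ and invoking Lemma \ref{Lem2} (so that $\mathcal E(K(C))=Q(\mathcal E(C))$ is a field), every $0\ne s$ is sent to a unit, so the universal property of $K(A)$ produces a morphism $K(A)\longrightarrow K(C)$; identifying $K(A)\cong K(X)$ and $K(C)\cong K(Y)$ gives the desired map. Independence of the choices of $U$ and $W$ I would deduce from the uniqueness clause of the universal property together with Proposition \ref{PX}.

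Conversely, given $\psi:K(X)\longrightarrow K(Y)$, I would fix non-trivial affine opens $Spec(A)\subseteq X$, $Spec(B)\subseteq Y$ and form the composite $A\longrightarrow K(A)\cong K(X)\overset{\psi}{\longrightarrow}K(Y)\cong K(B)$. Here the finite-type hypothesis on $X$ is essential: since $K(B)=colim_{0\ne t}B_t$ is a filtered colimit in $Comm(\mathcal C)$ and $A$ is of finite type, this map factors through some $A\longrightarrow B_t$ with $0\ne t\in\mathcal E(B)$. As $Spec(B_t)\longrightarrow Spec(B)$ is a Zariski open immersion (compactness), $Spec(B_t)\in ZarAff(Y)$ is non-trivial and $A\longrightarrow B_t$ defines a rational map $Spec(B_t)\longrightarrow X$. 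To see it is dominant, observe that $\mathcal E(A)\longrightarrow\mathcal E(B_t)$ is injective, because its composite with $\mathcal E(B_t)\longrightarrow Q(\mathcal E(B))$ equals the injection $\mathcal E(A)\longrightarrow Q(\mathcal E(A))\overset{\mathcal E(\psi)}{\longrightarrow}Q(\mathcal E(B))$ of fields; then for any non-trivial $U'\in ZarAff(X)$ the pullback $U'\times_XSpec(B_t)$ is non-trivial by the same localization-and-compactness argument, now using irreducibility of $X$ and the injectivity just obtained.

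Finally I would check that the two constructions are mutually inverse. Starting from $\psi$ and recomputing the first construction with $U=Spec(A)$ and $W=Spec(B_t)$ (legitimate, since the rational map built from $\psi$ factors through $Spec(A)$), the uniqueness in the universal property of $K(A)$ identifies the result with $\psi$; starting from $\phi$, one checks that the rational map reconstructed from the associated morphism agrees with $\phi$ on a common non-trivial open, hence represents the same rational map. The hard part will be precisely this last reconciliation: it requires simultaneously controlling the arbitrary choices of affine charts, descending from the function-field objects back to honest affine opens through the finite-type hypothesis, and matching the two rational maps only up to the equivalence relation, all while keeping the universal-property bookkeeping consistent. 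The injectivity-on-$\mathcal E$ statements and the dominance verification are the technical heart on which the rest rests.
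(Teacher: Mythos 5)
Your overall architecture matches the paper's: both directions are constructed exactly as in the paper, with the finite-type hypothesis used to factor $A\longrightarrow K(B)=colim_{t}B_t$ through some $B_t$, and the universal property of $K(A)$ used to turn a dominant rational map into a morphism of function field objects after checking that $\mathcal E(A)\longrightarrow \mathcal E(C)$ kills no non-zero element. That second direction is essentially identical to the paper's argument (the paper phrases it as a contradiction with the irreducibility of $Y$ rather than via $Spec(C_{\mathcal E(g)(s)})$, but the content is the same).

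The gap is in your verification that the rational map $Spec(B_t)\longrightarrow Spec(A)\longrightarrow X$ obtained from $\psi$ is dominant. You deduce injectivity of $\mathcal E(A)\longrightarrow\mathcal E(B_t)$ (correctly), and then assert that for an arbitrary non-trivial $U'\in ZarAff(X)$ the pullback $U'\times_X Spec(B_t)$ is non-trivial ``by the same localization-and-compactness argument''. That argument only applies to opens of the form $Spec(A_s)$ with $s\in\mathcal E(A)$, because it identifies the relevant intersection with a localization $Spec(C_{\mathcal E(g)(s)})$. An arbitrary non-trivial $Spec(C)\in ZarAff(U\times_XU')$ is a general Zariski open of $Spec(A)$ in the sense of To\"{e}n--Vaqui\'{e}, not a localization, and nothing in the paper or its framework guarantees that such an open contains a non-trivial localization; injectivity of $\mathcal E(A)\longrightarrow\mathcal E(B_t)$ alone does not force $B_t\otimes_AC\ne 0$. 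The paper closes exactly this point with object-level machinery: $K(A)\longrightarrow K(B)$ is a monomorphism because $K(A)$ has no non-zero proper subobjects (Proposition \ref{P1}), $K(A)$ is flat over $A$ with $K(A)\otimes_AK(A)\cong K(A)$, and $K(C)\cong K(A)$, $K(B_t)\cong K(B)$ by Proposition \ref{PX}; if $B_t\otimes_AC=0$ one then gets the contradiction $0\ne K(A)\cong K(A)\otimes_AK(A)\hookrightarrow K(B)\otimes_AK(A)=K(B_t)\otimes_{B_t}(B_t\otimes_AC)\otimes_CK(C)=0$. You need this (or an equivalent) argument; the $\mathcal E$-level injectivity is strictly weaker information. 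Separately, you defer the verification that the two constructions are mutually inverse; to be fair, the paper's own proof also stops after exhibiting the two constructions, so this is not a point where you fall short of the published argument.
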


\begin{proof} We consider a morphism $K(X)\overset{g}{\longrightarrow} K(Y)$ and choose some $Spec(A)=U\in ZarAff(X)$, $Spec(B)=V
\in ZarAff(Y)$. We consider the induced morphism $A\longrightarrow K(A)\cong K(X)\overset{g}{\longrightarrow}K(Y)\cong K(B)
=colim_{t\in \mathcal E(B)\backslash \{0\}}B_t$. Since $A$ is of finite type, this morphism factors through $B_t$ 
for some $0\ne t\in \mathcal E(B)$. Then since $Y$ is irreducible,   $Spec(B_t)\in ZarAff(Y)$ is dense in $Y$ and we obtain a rational
map $\phi:V_t:=Spec(B_t)\longrightarrow Spec(A)\longrightarrow X$ from $Y$ to $X$. If $\phi$  is not dominant, there exists 
non-trivial $U'\in ZarAff(X)$ such that $V_t\times_XU'=Spec(0)$. Then, for any  $W=Spec(C)\in ZarAff(U\times_XU')$, we must have
$B_t\otimes_AC=0$.  Since $K(A)$ has no non-zero proper subobjects, $K(A)\longrightarrow K(B)$ is a monomorphism. Then, since $K(A)$ is a flat $A$-module and $K(A)\otimes_AK(A)\cong K(A)$, we obtain a contradiction by considering the monomorphism:
\begin{equation*}
0\ne K(A)\cong K(A)\otimes_AK(A)\hookrightarrow K(B)\otimes_AK(A)=K(B_t)\otimes_AK(C)=K(B_t)\otimes_{B_t}(B_t\otimes_AC)\otimes_{C}K(C)=0
\end{equation*} Conversely, given a dominant rational map $\phi:V\longrightarrow X$ for some $V\in ZarAff(Y)$, the pullback
$U\times_XV$ is non-trivial for any $Spec(0)\ne Spec(A)=U\in ZarAff(X)$. Then, by choosing non-trivial $Spec(B')=V'\in ZarAff(U\times_XV)$, we obtain an induced morphism $V'=Spec(B')\longrightarrow Spec(A)=U$. The latter corresponds to a morphism $A\longrightarrow B'$ in $Comm(\mathcal C)$, which we denote by $\varphi:A\longrightarrow B'$. Now suppose that there exists
$0\ne s\in \mathcal E(A)$ such that $\mathcal E(\varphi)(s)=0\in \mathcal E(B')$. We now set $U':=Spec(A_s)$. Since 
$\phi:V\longrightarrow X$ is dominant, we know that $U'\times_XV$ is non-trivial and we choose some non-trivial
$V''\in ZarAff(U'\times_XV)$. Then, since $\mathcal E(\varphi)(s)=0\in \mathcal E(B')$, the intersection 
$V''\times_XV'$ must be trivial, which contradicts the fact that $Y$ is irreducible. Hence, it follows that
$\mathcal E(\varphi)(s)\ne 0$ for each $0\ne s\in \mathcal E(A)$. Accordingly, the morphism 
$\varphi:A\longrightarrow B'$ in $Comm(\mathcal C)$ now induces a morphism
$K(X)\cong K(A)\longrightarrow K(B')\cong K(Y)$.
\end{proof}

\end{document}